\newcommand{\R}{\mathbb R}
\newcommand{\Q}{\mathbb Q}
\newcommand{\T}{\mathbf T}
\newcommand{\Z}{\mathbb Z}
\newcommand{\C}{\mathbb C}
\newcommand{\hop}{\vskip .2cm\noindent}
\newcommand{\hip}{\vskip .1cm\noindent}
\newtheorem{enonce}{}[section]
\newtheorem{theo}[enonce]{Theorem}
\newtheorem{cor}[enonce]{Corollary}
\newtheorem{prop}[enonce]{Proposition}
\newtheorem{lem}[enonce]{Lemma}
\theoremstyle{definition}
\newtheorem{defi}[enonce]{Definition}
\theoremstyle{remark}
\newtheorem{rema}[enonce]{Remark}
\title{Projective properties of Lorentzian surfaces}
\author{Pierre Mounoud}
\date{}
\begin{document}
 \maketitle
\begin{abstract}
We investigate projective properties of Lorentzian surfaces. In particular, we prove that if $\T$ is a non flat torus, then the index of its isometry group in its projective group is at most two. We also prove that  any topologically finite noncompact surface can be endowed with a metric having a non isometric projective transformation of infinite order.
\end{abstract}
\section{Introduction}
Two (pseudo-)Riemannian metrics are called projectively equivalent if they have the same unparametrized geodesics. Projectively equivalent metrics are related to first integrals of  geodesic flows. The link is given by the following theorem, already known from  Darboux 
 \cite[\S 608]{Darboux} and rediscovered  by Matveev and Topalov \cite{MatetTopa} (in Russian, see for example \cite{Bryant_Matveev} for a recent reference in English). 
\begin{theo}[Darboux]\label{theo_classique}
Two metrics (possibly of different signature) $g$ and  $\bar g$ on a surface $M$ are projectively equivalent
if and only if the function $I: TM \rightarrow \R$ defined by
$$I(v)= \left(\frac{\det(g)}{\det(\bar g)}\right)^{\frac{2}{3}} \bar g(v,v)$$
is an integral of the geodesic flow of $g$.
\end{theo}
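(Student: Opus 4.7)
The approach is to work with the tensor $T := \bar\nabla - \nabla$, the difference of the Levi-Civita connections of $\bar g$ and $g$. A classical polarization argument shows that $g$ and $\bar g$ have the same unparametrized geodesics iff $T(v,v)$ is collinear with $v$ for every tangent vector $v$, iff there exists a $1$-form $\phi$ on $M$ with $T^i_{jk}=\delta^i_j\phi_k+\delta^i_k\phi_j$. First I would record that, by the standard identity $\Gamma^k_{ki}=\partial_i\log\sqrt{|\det g|}$, the trace of $T$ is $T^k_{ki}=\tfrac{1}{2}\partial_i\log|\det\bar g/\det g|$ whether or not projective equivalence holds, so that any such $\phi$ is necessarily $\phi_j=\tfrac{1}{6}\partial_j\log|\det\bar g/\det g|$.

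The main calculation is the derivative of $I$ along a $g$-geodesic $\gamma$ with velocity $v=\dot\gamma$. Using $\nabla_v v = 0$ together with the identity
\[
(\nabla_X \bar g)(Y,Z)=\bar g(T(X,Y),Z)+\bar g(Y,T(X,Z)),
\]
which is immediate from $\bar\nabla\bar g=0$, one gets
\[
\frac{dI}{dt}=df(v)\,\bar g(v,v)+2f\,\bar g(T(v,v),v).
\]
With the specific choice $f=(\det g/\det\bar g)^{2/3}$, a direct differentiation yields $d\log f = -\tfrac{2}{3}d\log|\det\bar g/\det g| = -4\phi$ with $\phi$ the candidate one-form identified above; this numerical match pins down the exponent $2/3$. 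The forward implication then drops out: if projective equivalence holds and $T(v,v)=2\phi(v)v$, then $\bar g(T(v,v),v)=2\phi(v)\bar g(v,v)$ and the two terms in $dI/dt$ cancel identically.

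For the converse, assuming $I$ is a first integral, the same computation gives
\[
\bar g\bigl(T(v,v)-2\phi(v)\,v,\,v\bigr)=0 \quad \text{for all } v\in TM.
\]
I would then set $\tau^i_{jk}:=T^i_{jk}-\delta^i_j\phi_k-\delta^i_k\phi_j$; the trace formula forces $\tau^k_{kj}=0$, and the displayed identity reads $\bar g(\tau(v,v),v)=0$. The main obstacle is the final pointwise linear-algebra claim: a symmetric $(1,2)$-tensor on a $2$-dimensional vector space that is both trace-free and $\bar g$-orthogonal to its diagonal must vanish. In a $\bar g$-orthonormal frame this is a concrete check, the orthogonality condition giving four scalar equations on the six independent components of $\tau$ and the two trace equations then forcing every component to zero; this is where the surface hypothesis enters crucially, since in higher dimensions these conditions no longer pin $\tau$ down and one must invoke the full Levi-Civita normal form. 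Once $\tau\equiv 0$, the equality $T^i_{jk}=\delta^i_j\phi_k+\delta^i_k\phi_j$ holds and projective equivalence follows.
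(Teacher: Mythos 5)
The paper does not prove this statement: it is quoted as a classical theorem of Darboux, rediscovered by Matveev and Topalov, and the reader is referred to \cite{Darboux}, \cite{MatetTopa} and \cite{Bryant_Matveev}. So there is no in-text argument to compare yours to; judged on its own, your proof is correct and self-contained modulo the (genuinely classical) Levi-Civita/Weyl criterion that projective equivalence amounts to $T^i_{jk}=\delta^i_j\phi_k+\delta^i_k\phi_j$. The bookkeeping all checks out: in dimension $2$ the candidate tensor has trace $(n+1)\phi_j=3\phi_j$, which together with $T^k_{kj}=\tfrac12\partial_j\log|\det\bar g/\det g|$ forces $\phi_j=\tfrac16\partial_j\log|\det\bar g/\det g|$ and hence $d\log f=-4\phi$ for $f=|\det g/\det\bar g|^{2/3}$ --- this is exactly why the exponent is $\tfrac{2}{n+1}=\tfrac23$, consistent with the paper's formula \eqref{eq_formule}. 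The one step that genuinely needed verification is your final pointwise claim, and it does hold: lowering the index, $\tau_{ijk}=\bar g_{il}\tau^l_{jk}$ is symmetric in $(jk)$, the condition $\bar g(\tau(v,v),v)=0$ for all $v$ says its total symmetrization vanishes, which in two dimensions gives $\tau_{111}=\tau_{222}=0$, $\tau_{211}=-2\tau_{112}$, $\tau_{122}=-2\tau_{212}$, and the two trace equations $\bar g^{ki}\tau_{ikj}=0$ then annihilate the remaining components $\tau_{112},\tau_{212}$ in a (pseudo-)orthonormal frame; so the six conditions are independent and $\tau=0$. You are also right to use $\log|\cdot|$ throughout, since the paper explicitly allows $g$ and $\bar g$ to have different signatures, in which case $\det g/\det\bar g<0$. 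No gaps.
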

Let us remark that the integral $I$ given by Theorem \ref{theo_classique} is quadratic (along fibers). Reciprocally, any non degenerate quadratic integral provides projectively equivalent metrics. It follows therefore from Clairaut Lemma, see Lemma \ref{lem_clairaut}, that if $g$ has a Killing field then the set of metrics projectively equivalent to $g$ is at least two dimensional. We start this paper by giving direct, but in our opinion interesting, consequences of this fact. For example, we increase the family of Lorentzian tori without conjugate points given in \cite{BetM1}, see Theorem \ref{theo_conj}. We also study the projective extensions of surfaces all of whose spacelike geodesic are closed that have a Killing field, see section \ref{sec_zoll}.

The rest of the article, is devoted to non affine projective transformations of Lorentzian surfaces.
We recall that a diffeomorphism of a pseudo-Riemannian manifold is called projective if it sends unparametrized geodesics on unparametrized geodesics. The projective transformations of a pseudo-Riemannian manifold $(M,g)$ form a Lie group 
 that contains the group of affine transformations 
(given by the diffeomorphisms preserving the connection i.e.\ the parametrized geodesics) that contains the group of isometries. 
Note that finding non affine projective transformations is the same as finding 
 projectively equivalent Lorentzian metrics with distinct Levi-Civita connections that are isometric.

The projective groups of compact Riemannian manifolds are now well understood, thanks notably to works from  Matveev \cite{Matveev_jdg} and Zeghib \cite{Zeghib}. The conclusion of these studies is a theorem {\it \`a la} Ferrand-Obata (compare with \cite{ferrand}):
\begin{theo}[\cite{Zeghib}]\label{theo_ghani}
Let $(M, g)$ be a compact Riemannian manifold. If $M$ is not a Riemannian
finite quotient of a standard sphere, 
then the projective group of $g$ 
is a finite extension of its affine group. 
\end{theo}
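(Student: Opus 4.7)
My plan is to argue by contradiction, assuming that $P(g)/\mathrm{Aff}(g)$ is infinite, and extract enough analytic data to force $(M,g)$ to have constant positive sectional curvature. Since $M$ is compact, the projective group $P(g)$ is a finite-dimensional Lie group acting smoothly on $M$, so under our assumption we can find a one-parameter subgroup $(\phi_t)_{t\in\R}\subset P(g)$ whose image in $P(g)/\mathrm{Aff}(g)$ is non-trivial. Differentiating produces a projective vector field $X$ which is not affine, hence $\mathcal L_X\nabla\neq 0$ for the Levi-Civita connection $\nabla$ of $g$.

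Second, I would transfer this dynamical datum into tensorial data. For each $t$, the metric $g_t=\phi_t^\ast g$ is projectively equivalent to $g$ but has a different Levi-Civita connection. By the higher-dimensional analogue of Theorem \ref{theo_classique} (Matveev--Topalov, or equivalently Eastwood--Matveev's first BGG projective equation), projective equivalence of $g$ and $g_t$ corresponds to a symmetric $(0,2)$-tensor $A_t$ that is a solution of the projective metrisation equation, which in turn yields a quadratic first integral $I_t$ of the geodesic flow of $g$. Differentiating at $t=0$ gives an infinitesimal solution: a non-parallel Killing $2$-tensor $K$ (non-parallel precisely because $X$ fails to be affine), together with the associated non-trivial quadratic integral.

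Third, I would use the existence of this non-parallel Killing $2$-tensor in combination with the projective flow. Diagonalising $K$ with respect to $g$, the eigenvalues $\lambda_1(x)\le\cdots\le\lambda_n(x)$ are non-constant functions on $M$ (otherwise $K$ would be parallel), and their level sets carry the rich geometric structure discovered by Levi-Civita: in a neighbourhood of a point where the eigenvalues are simple, $g$ admits Liouville-separable normal form. The orbit $\{\phi_t\}$ acts on the eigenvalue functions, and compactness of $M$ forces the critical values of each $\lambda_i$ to be attained and to satisfy strong compatibility conditions.

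The main obstacle, and the heart of the argument, is the global rigidity step: passing from the existence of a non-trivial projective flow together with a non-parallel Killing tensor to the conclusion that $(M,g)$ has constant positive curvature. This is exactly the projective Lichnerowicz--Obata conjecture, and one has to show that the eigenvalues of $K$ cannot avoid collision on a compact manifold unless the curvature is constant, typically by studying the behaviour of these eigenvalues along geodesics and exploiting that the projective flow moves the critical set. Once constant positive curvature is established, the standard classification of space forms identifies $(M,g)$ as a finite Riemannian quotient of the round sphere, contradicting the hypothesis and finishing the proof. The dynamical input in Zeghib's approach replaces the harder PDE analysis in Matveev's proof, by using recurrence of $\phi_t$-orbits to propagate local Levi-Civita normal forms globally.
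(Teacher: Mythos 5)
The paper does not prove this statement at all: it is quoted verbatim from Zeghib's article \cite{Zeghib} as background, so there is no internal proof to compare yours against. Judged on its own, your proposal has a genuine gap at its very first step, and it is precisely the gap that Zeghib's paper exists to fill.

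You assume the index of $\mathrm{Aff}(g)$ in $\mathrm{Proj}(g)$ is infinite and claim you can then ``find a one-parameter subgroup $(\phi_t)$ of $\mathrm{Proj}(g)$ whose image in $\mathrm{Proj}(g)/\mathrm{Aff}(g)$ is non-trivial.'' This does not follow. Every one-parameter subgroup lies in the identity component $\mathrm{Proj}(g)^0$, so your claim is equivalent to $\mathrm{Proj}(g)^0\not\subset\mathrm{Aff}(g)$, i.e.\ to the existence of a complete non-affine projective vector field. But the index can a priori be infinite while $\mathrm{Proj}(g)^0\subset\mathrm{Aff}(g)$, namely when the extra projective transformations live in infinitely many connected components (an ``infinite discrete'' projective group). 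In that situation there is nothing to differentiate, no non-affine projective vector field, and the entire chain of your argument --- the Lie derivative $\mathcal L_X\nabla\neq 0$, the infinitesimal metrisation solution, the non-parallel Killing $2$-tensor obtained by differentiating $\phi_t^*g$ at $t=0$ --- never gets off the ground. The case you do treat is exactly the connected (Lichnerowicz--Obata) case already settled by Matveev \cite{Matveev_jdg}; the new content of the quoted theorem, as its title in \cite{Zeghib} indicates, is the discrete case, which requires a different mechanism (roughly, analysing the dynamics of a sequence of projective transformations and the degeneration of the associated family of metrisation solutions, not a flow). Secondarily, even in the connected case your final paragraph states the conclusion of the projective Lichnerowicz--Obata conjecture (``the eigenvalues cannot avoid collision unless the curvature is constant'') rather than proving it; that step is the bulk of Matveev's proof and cannot be waved through, though it is at least a correct citation target. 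The first gap, however, is structural: as written, your argument proves a strictly weaker statement than the theorem.
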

It is natural to wonder  if such a result still hold in the pseudo-Riemannian setting. In the Lorentzian case, an important step has been done in this direction by Bolsinov, Matveev and Rosemann: 
\begin{theo}[{\cite[Theorem 1.2]{Matveev_new}}]\label{BMR}
Let $(M, g)$ be the compact Lorentzian manifold. Then any projective vector field
on M is an affine vector field.
\end{theo}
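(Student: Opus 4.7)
The plan is to translate the projective equation for $V$ into a first-order PDE (of Sinjukov--Benenti type) satisfied by a symmetric $(0,2)$-tensor, and then exploit compactness together with Lorentzian signature to force this tensor to be essentially parallel, which yields $\mathcal{L}_V\nabla=0$.

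First I would pass from the vector field to a symmetric tensor. Let $\phi_t$ denote the local flow of $V$. Since $V$ is projective, each $g_t:=\phi_t^\ast g$ is projectively equivalent to $g$. By the higher-dimensional pseudo-Riemannian analogue of Theorem~\ref{theo_classique} (Matveev--Topalov, extended by Bolsinov--Matveev), the correspondence $g\mapsto g_t$ produces a family of solutions of the projective equivalence PDE. Differentiating at $t=0$ gives a symmetric tensor $a$ on $M$ which vanishes exactly when $V$ is affine, and whose $(1,1)$-version $A$ satisfies the Sinjukov equation
\[
\nabla_X A \;=\; X\otimes\lambda \;+\; \lambda^{\sharp}\otimes X^{\flat},
\]
for a $1$-form $\lambda$ which turns out to be exact, $\lambda=df$, with $f$ proportional to $\operatorname{tr} A$. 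The claim ``$V$ is affine'' now becomes ``$\lambda\equiv 0$'', equivalently ``$A$ is $\nabla$-parallel''.

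The second, and main, step is to show that on a compact Lorentzian manifold the Sinjukov equation forces $\lambda\equiv 0$. A standard calculation shows that the coefficients of the characteristic polynomial of $A$ are smooth first integrals of the geodesic flow of $g$; in particular the eigenvalue functions of $A$ are continuous on $M$. Compactness ensures that $f=\operatorname{const}\cdot\operatorname{tr}A$ attains its maximum and minimum, forcing $\lambda=df$ to vanish at those critical points. The delicate part, and the main obstacle, is to propagate this vanishing to all of $M$: in Lorentzian signature $A$ need not be diagonalizable, so Jordan blocks of real eigenvalues and complex conjugate eigenvalue pairs must be treated separately. The Bolsinov--Matveev--Rosemann argument carries this out by analyzing the local algebraic type of $A$ along the level sets of $f$, using integral identities obtained by contracting the Sinjukov equation against suitable (in particular lightlike) directions, and showing that a nontrivial Jordan structure combined with compactness is inconsistent with $\lambda\not\equiv 0$. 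This Lorentzian Jordan block analysis has no Riemannian analogue and is the heart of \cite{Matveev_new}.

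Once $\lambda\equiv 0$ is established, $A$ is $\nabla$-parallel, the tensorial derivation $\mathcal{L}_V\nabla$ vanishes, and therefore $V$ is affine. The reduction to the Sinjukov equation is classical; the crux of the whole argument is the propagation step of the second paragraph, where the pseudo-Riemannian (non-diagonalizable) behaviour of $A$ must be controlled globally on the compact manifold.
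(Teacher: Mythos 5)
First, note that Theorem~\ref{BMR} is not proved in this paper at all: it is quoted verbatim from \cite{Matveev_new} (Theorem 1.2 there) and used as an input. So there is no in-paper argument to compare your attempt against; the only question is whether your write-up stands on its own as a proof. It does not. Your first step (linearizing the flow $\phi_t^\ast g$ at $t=0$ to get a solution $A$ of the Sinjukov/metrizability equation with $\lambda$ proportional to $d(\operatorname{tr}A)$, so that ``$V$ affine'' becomes ``$\lambda\equiv 0$'') is the standard and correct reduction. But the entire content of the theorem is the second step, and there you write that ``the Bolsinov--Matveev--Rosemann argument carries this out'' by analyzing Jordan blocks along level sets of $f$. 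That is a description of where a proof can be found, not a proof: the maximum principle only gives $\lambda=0$ at the two extremal points of $\operatorname{tr}A$, and the propagation to all of $M$ --- which in Lorentzian signature must handle non-diagonalizable $A$ and complex eigenvalues, and which you yourself identify as the heart of the matter --- is simply not carried out. As a blind attempt, this is an annotated citation rather than an argument.

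There is also a technical slip in the one concrete claim you do make in the second step: the coefficients of the characteristic polynomial of $A$ are functions on $M$, not first integrals of the geodesic flow. The Topalov--Matveev integrals are the fiberwise-quadratic functions $I_t(v)=\det(A-t\,\mathrm{Id})\,g\bigl((A-t\,\mathrm{Id})^{-1}v,v\bigr)$ on $TM$; the eigenvalues of $A$ are genuinely non-constant in the non-affine case (their variation along geodesics is precisely what the propagation argument controls), so they cannot be integrals of the flow. If you want a self-contained proof you would need to reproduce the eigenvalue-monotonicity and Jordan-type analysis of \cite{Matveev_new}; if you only want to use the statement, the honest thing to do is what the paper does, namely cite it.
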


Using the classification of Lorentzian tori whose geodesic flow admits a quadratic integral given by Matveev in \cite {Matveev_au_japon}, we prove in section \ref{subsec_transfocompact} the following extension of Theorem \ref{BMR} for surfaces:
\begin{theo}\label{theo_fini}
 Let $\T$ be a non flat Lorentzian torus. The index of the isometry group of $\T$ into the projective group of $\T$ is lower or equal to $2$.
\end{theo}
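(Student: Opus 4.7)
The plan is to combine Theorem \ref{BMR}, which handles the infinitesimal part of the projective group, with Matveev's classification \cite{Matveev_au_japon} of Lorentzian tori whose geodesic flow admits a non-trivial quadratic first integral, which handles the discrete part. Write $G = \mathrm{Proj}(\T)$ with identity component $G_0$ and $H = \mathrm{Isom}(\T)$.

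\emph{Identity component.} By Theorem \ref{BMR}, every projective vector field is affine, so $G_0$ lies in the affine group. For an affine vector field $X$, the tensor $\mathcal L_X g$ is parallel for the Levi-Civita connection of $g$. On a non-flat Lorentzian torus this rigidly constrains $X$: outside of the degenerate cases where $g$ admits a parallel null line field (which are themselves on Matveev's list and must be examined directly), the only parallel symmetric $(0,2)$-tensors are multiples of $g$, and compactness of $\T$ then forces $\mathcal L_X g = 0$. The conclusion of this step is $G_0 \subseteq H$.

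\emph{Discrete part.} Let $\phi \in G \setminus H$. If $\phi$ were affine, an analogous analysis applied to the parallel tensor $\phi^* g - g$ would force $\phi^* g = g$, contradicting $\phi \notin H$. Hence $\phi$ is not affine, and $\bar g := \phi^* g$ is projectively equivalent to $g$ with a different Levi-Civita connection. By Theorem \ref{theo_classique} this yields a quadratic first integral of the geodesic flow of $g$ not proportional to $g$, so $g$ appears in Matveev's explicit list \cite{Matveev_au_japon}. The space of metrics projectively equivalent to such a $g$ is a well-understood one-parameter family; any $\phi \in G$ acts on it by permutation, isometries being the stabilizer of $g$ itself. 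Going through the list, one checks that in each case the permutation action preserving the class of $g$ has order at most two beyond the isometries, generated by an involution that swaps the two Liouville coordinates of the normal form (or reverses a sign). Thus $\phi^2 \in H$ and $[G:H] \leq 2$.

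The main obstacle is the case-by-case verification in the last paragraph: for each normal form supplied by Matveev's classification, one has to compute the full projective group and check that the only source of non-isometric projective transformations is the expected involution compatible with the torus lattice. This is a finite but nontrivial enumeration, relying essentially on the explicit Liouville-type coordinates given by the classification and on the compatibility of the permitted involutions with the deck group $\Z^2$ of the universal cover.
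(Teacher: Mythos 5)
Your outline correctly identifies the two inputs (Theorem \ref{BMR} for the infinitesimal part, Matveev's classification for the rest), and the reduction of $[G:H]$ to the size of the orbit of $g$ in the space $\mathcal P$ of metrics projectively equivalent to $g$ is sound. But the entire content of the theorem is hidden in your final sentence ``one checks that \dots the permutation action \dots has order at most two beyond the isometries,'' and as stated this check is not a routine enumeration: nothing in your argument explains why the orbit of $g$ in $\mathcal P$ is even \emph{finite}. The space $\mathcal P$ is a two-parameter family (not one-parameter, as you write: the quadratic integrals form a $2$-dimensional space spanned by the energy and one further integral), and a single non-affine projective transformation can a priori generate an infinite orbit in it. Indeed, the paper's Theorem \ref{theo_existe} constructs exactly this phenomenon on non-compact surfaces: a map $\tau$ whose powers move $g$ through infinitely many non-isometric members of the pencil $g_{a,\ell}$. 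So some genuinely global use of compactness is indispensable, and your proposal contains none.

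The paper supplies that global input in two different ways according to Matveev's dichotomy. In the Killing-field case, after pinning down how a projective $\Phi$ acts on $K$ and on the parameters $(a,\ell)$ (Lemma \ref{lem_KsurK}), the contradiction comes from comparing the integrals of $i_K\mathrm{vol}_g$ and $i_K\mathrm{vol}_{\Phi^*g}$ over the compact space of leaves of $K$: these must be equal since $\Phi$ preserves $\pm K$, yet an explicit estimate shows one is strictly larger than the other unless $\Phi^*g=\pm g$. In the Liouville case, one first invokes Zeghib's theorem to get $[G:H]<\infty$, then uses a fixed-point argument (the finite group $G/G_0$ acts on $\mathcal P_0\cong\R^2$ preserving a Riemannian metric, hence fixes a point by uniformization) to reduce to isometries of a single Riemannian Liouville metric $g_0$, and finally the explicit Lemma \ref{lem_isom_liouv} bounds by $2$ the index of the isometries preserving the Liouville integral. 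Your proposal would need to reconstruct both of these arguments; without them the ``finite but nontrivial enumeration'' you defer to cannot be carried out, because the objects to be enumerated have not been shown to be finite in number.
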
 

In section \ref{sec_transfononcompact}, we consider, the case of non compact surfaces. We show that in this case the situation is completely different: non affine projective transformations of infinite order often exist.
\begin{theo}\label{theo_existe}
 Every orientable, non compact surface of finite topological type can be endowed with a Lorentzian metric having an affine group of infinite index in its projective group. Moreover, any metric projectively equivalent to it has the same property.
\end{theo}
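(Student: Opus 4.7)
The plan is to exploit the observation following Theorem~\ref{theo_classique}: a Lorentzian surface with a nontrivial Killing field $X$ automatically carries a two-dimensional family of projectively equivalent metrics, arising via Darboux's theorem from the quadratic integral $g(\cdot,X)^2$ of the geodesic flow. I would construct a model metric $g$ on a simple open surface together with a one-parameter group $\{\phi_s\}_{s\in\R}$ of diffeomorphisms that are projective for $g$ but act nontrivially (by pullback) on this two-parameter family. Since the stabilizer of any single metric of the family is its isometry group, such a flow will immediately produce infinitely many non-affine projective transformations and hence make $\mathrm{Aff}(g)$ of infinite index in $\mathrm{Proj}(g)$.

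The natural model is a metric of the form $g = 2\,du\,dv + h(u)\,dv^2$ with Killing field $\partial_v$, for which one tries a linear flow $\phi_s:(u,v)\mapsto(e^{s}u,e^{as}v)$ with $a$ and $h$ tuned so that $\phi_s$ preserves the unparametrized geodesic foliation. Whether $\phi_s$ is affine can be tested directly via Theorem~\ref{theo_classique}: $\phi_s$ is affine iff the integral $I_{\phi_s}(w)=\bigl(\det g/\det\phi_s^*g\bigr)^{2/3}\phi_s^*g(w,w)$ is proportional to the Hamiltonian $g(w,w)$, and a finite calculation shows that for a generic profile $h$ this happens only at $s=0$. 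The main obstacle is then transplanting this local picture onto every orientable noncompact surface $S$ of finite topological type. Every such $S$ is diffeomorphic to a closed orientable surface minus a nonempty finite set and is a quotient $\R^2/\Gamma$ for some discrete group $\Gamma$; I would build the model so that $\Gamma$ lies inside its isometry group and is normalized by $\{\phi_s\}$, so that both the metric and the projective flow descend. Handling the zoology of topological types uniformly --- from the plane, the cylinder and the pair of pants up to higher-genus surfaces with several punctures --- is the principal technical point; one may need to treat low genus separately or, for higher genus, prescribe the model in a standard ``end'' and complete the construction on the remaining compact core.

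For the ``moreover'' statement, any metric $\bar g$ projectively equivalent to $g$ satisfies $\mathrm{Proj}(\bar g)=\mathrm{Proj}(g)$ as subgroups of $\mathrm{Diff}(S)$, so the same flow $\{\phi_s\}$ lies in $\mathrm{Proj}(\bar g)$. In dimension two, two projectively equivalent metrics share a Levi-Civita connection iff they are constant multiples of one another, hence $\phi_s\in\mathrm{Aff}(\bar g)$ iff $\phi_s^*\bar g=c_s\,\bar g$ for some scalar $c_s$. By Darboux once more, $\bar g$ corresponds to a quadratic integral $J$ in the two-dimensional space spanned by the Hamiltonian $H$ and the Clairaut integral $g(\cdot,X)^2$, and pullback by $\phi_s$ acts on this plane by a nontrivial linear flow --- nontrivial precisely because $\phi_s$ was already non-affine for $g$. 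Consequently $\phi_s^*\bar g$ is proportional to $\bar g$ only on a discrete set of $s$, forcing $[\mathrm{Proj}(\bar g):\mathrm{Aff}(\bar g)]=\infty$ just as for $g$.
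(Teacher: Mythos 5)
There is a genuine gap at the heart of your construction: the one-parameter flow you propose cannot be projective without being affine, at least not under the hypotheses that your own ``moreover'' argument requires. Take $g=2\,du\,dv+h(u)\,dv^2$ and $\phi_s(u,v)=(e^su,e^{as}v)$. Then $\phi_s^*g=e^{(1+a)s}\,2\,du\,dv+e^{2as}h(e^su)\,dv^2$ has no $du^2$-component. If the space of quadratic integrals of the geodesic flow of $g$ is two-dimensional (spanned by the energy and the square of the Clairaut integral $C=g(\partial_v,\cdot)$), then by Theorem \ref{theo_classique} every metric projectively equivalent to $g$ belongs to the family $g_{a',\ell'}$ of Proposition \ref{prop_proj_bande}, whose $du^2$-coefficient $\frac{a'\ell'}{(1+\ell' h)^2}$ vanishes only when $\ell'=0$, i.e.\ only when the metric is a constant multiple of $g$. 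Hence for such $g$ your $\phi_s$ is projective only if it is a homothety, hence affine. To make $\phi_s$ projective and non-affine you would have to use a metric with at least three independent quadratic integrals, i.e.\ one of the degenerate normal forms of \cite{Bryant_Matveev} --- which is precisely the non-generic situation, and precisely the situation your ``moreover'' paragraph excludes by assuming the two-dimensional integral space. So the claim that ``for a generic profile $h$ this happens only at $s=0$'' is backwards: for generic $h$ the map $\phi_s$ is simply not projective for $s\neq 0$. (This is also why the introduction of the paper warns that the projective vector fields of \cite{Bryant_Matveev} and \cite{Matveev_mathann} are local and usually incomplete.)

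The paper circumvents this by renouncing flows altogether and producing a single \emph{discrete} non-affine projective transformation whose powers lie in pairwise distinct cosets of the affine group: the metric on $\R^2$ is assembled from the family $g_{a,\ell}$ with $x$-dependent parameters $A(x),\Lambda(x)$ chosen so that the unit translation $\tau(x,y)=(x+1,y)$ realizes the algebraic identity of Lemma \ref{lem_matrice_equiv}; the only projective vector field of the resulting metric is the Killing field. Two further points in your sketch also need repair. First, in the ``moreover'' step, a one-parameter group acting linearly on the two-plane of integrals generically has a real eigendirection; if the corresponding integral is nondegenerate it is a metric $\bar g_0$ in the projective class with $\phi_s^*\bar g_0\propto\bar g_0$ for \emph{all} $s$, so your conclusion ``proportional only on a discrete set of $s$'' does not follow. (In the paper the action of $\tau$ on the integral plane is unipotent with the degenerate integral $C^2$ as its unique eigendirection, which is exactly what saves the argument there; one must also rule out extra integrals via \cite{Bryant_Matveev}.) Second, the realization of every finite-type noncompact orientable surface, which you defer as ``the principal technical point'', is not routine for metrics with a Killing field: the paper does it via the generic reflections $\sigma_n$ along geodesics orthogonal to the Killing field, the doubling construction and Proposition 4.31 of \cite{BetM}, together with the commutation $\tau\circ\sigma_n=\sigma_{n+1}\circ\tau$ guaranteed by Remark \ref{rem_perp}.
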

Projective properties of non compact Lorentzian surfaces have already been studied by several authors. We just mention here \cite{Bryant_Matveev} by Bryant, Manno and Matveev and \cite{Matveev_mathann} (and its appendix) by Matveev. In these papers, the point of view is local (whereas the point of view of Theorem \ref{theo_existe} is clearly a global one). The authors provide local normal form of metrics admitting projective vector fields. 
Hence, they work on the disc and the vector fields obtained are not always complete and therefore do not always generate projective transformations.

I wish to thank C.\ Boubel for making me realize that my first ideas were heading nowhere.

\section{Projectively equivalent metrics given by Killing fields}\label{section_examples}
\subsection{Clairaut Lemma}
Let us recall first the very classical Clairaut Lemma:
\begin{lem}[Clairaut Lemma]\label{lem_clairaut}
 Let $(M,g)$ be a pseudo-Riemannian manifold and $K$ be a Killing field of $(M,g)$. 
 The function $C:TM\rightarrow \R$ defined by $C(v)=g(K,v)$ is a (linear) integral of the geodesic flow.
\end{lem}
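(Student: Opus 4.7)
The plan is to show directly that $C$ is constant along every geodesic. Let $\gamma:I\to M$ be a geodesic of $g$ and consider the function $t\mapsto C(\dot\gamma(t))=g(K_{\gamma(t)},\dot\gamma(t))$. Since $C$ is linear on fibers of $TM$, being an integral of the geodesic flow is exactly the statement that this function is constant for every geodesic.

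Differentiating and using the metric compatibility of the Levi-Civita connection $\nabla$, I would write
$$\frac{d}{dt}\,g(K,\dot\gamma)=g(\nabla_{\dot\gamma}K,\dot\gamma)+g(K,\nabla_{\dot\gamma}\dot\gamma).$$
The second term vanishes because $\gamma$ is a geodesic, i.e.\ $\nabla_{\dot\gamma}\dot\gamma=0$. For the first term I would invoke the Killing equation, which in its standard form on the $(0,2)$-tensor $X,Y\mapsto g(\nabla_X K,Y)$ says that this tensor is antisymmetric in $(X,Y)$; in particular $g(\nabla_{\dot\gamma}K,\dot\gamma)=0$. Thus the derivative is identically zero and $C$ is a first integral.

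The only step that really uses anything nontrivial is the antisymmetry of $\nabla K$, which is just a rewriting of $L_K g=0$ via $(L_Kg)(X,Y)=g(\nabla_X K,Y)+g(\nabla_Y K,X)$. I do not expect any genuine obstacle: the whole proof is a two-line computation. Linearity of $C$ along fibers is immediate from its definition, so the qualifier ``(linear)'' needs no separate argument.
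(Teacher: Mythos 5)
Your proof is correct and is the standard two-line argument (the paper itself omits the proof, citing the lemma as classical): metric compatibility plus the geodesic equation kill one term, and the antisymmetry of $(X,Y)\mapsto g(\nabla_X K,Y)$ coming from $L_Kg=0$ kills the other. Nothing further is needed.
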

Keeping the notations of Lemma \ref{lem_clairaut}, we note that if $C$ is bounded then  for small $t\in \R$, the form $J=g+tC^2$ is a non degenerate quadratic integral of the geodesic flow of $g$. It follows from Theorem \ref{theo_classique} that the metric $\bar g$ defined by:
\begin{equation}\label{eq_formule}\bar g(v,v)=\left(\frac{\det(g)}{\det(J)}\right)^{\frac{2}{n-1}} J(v)\end{equation}
is projectively equivalent to $g$.

If $\dim M=2$, on the neighborhood on any point that is not a saddle point of $K$ there exists coordinates $(x,y)$ and a function $f$ such that the metric reads $2dxdy+f(x)dy^2$ (see \cite{BetM} for an account on surfaces with a Killing field). The metric projectively equivalent to $g$ obtained thanks to Clairaut integral are easily given. 
\begin{prop}\label{prop_proj_bande}
  Let $I\subset \R$ be an interval and $f:I \rightarrow \R$ be a smooth function. For any $(a,\ell)\in \R^*\times \R$ such that $1+\ell f$ does not vanish on $I$, the metrics $g$ and $g_{a,\ell}$ defined on $I\times \R$ respectively by $2dxdy+f(x)dy^2$ and $\frac{a\ell}{(1+\ell f(x))^2} dx^2 +\frac{2a}{1+\ell f(x)}dxdy +\frac{af(x)}{1+\ell f(x)}dy^2$ have the same unparametrized geodesics.
 \end{prop}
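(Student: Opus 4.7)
The plan is to apply, in this explicit coordinate setting, the very construction described in the paragraph preceding the proposition: identify a Killing field of $g=2dxdy+f(x)dy^2$, build the associated Clairaut integral, plug it into formula (\ref{eq_formule}), and recognize $g_{a,\ell}$ after a constant rescaling.

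First I would note that $K=\partial_y$ is a Killing field of $g$, since the coefficients of $g$ depend on $x$ only. The Clairaut $1$-form is therefore
$$C=g(K,\cdot)=dx+f(x)\,dy,$$
and its symmetric square expands to $C^{2}=dx^{2}+2f(x)\,dxdy+f(x)^{2}dy^{2}$. For a real parameter $\ell$, set $J=g+\ell C^{2}$; a direct expansion gives
$$J=\ell\,dx^{2}+2(1+\ell f)\,dxdy+f(1+\ell f)\,dy^{2}.$$
In the basis $(\partial_{x},\partial_{y})$, the matrix of $g$ has determinant $-1$ and the matrix of $J$ has determinant $\ell f(1+\ell f)-(1+\ell f)^{2}=-(1+\ell f)$. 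The assumption $1+\ell f\neq 0$ on $I$ is therefore exactly what ensures that $J$ is a non-degenerate quadratic integral of the geodesic flow of $g$.

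Next I would feed $J$ into formula (\ref{eq_formule}). In dimension $n=2$ the exponent $2/(n-1)$ equals $2$, so the projectively equivalent metric produced by Clairaut's integral is
$$\bar g=\left(\frac{\det g}{\det J}\right)^{2}J=\frac{1}{(1+\ell f(x))^{2}}\,J,$$
which after substitution becomes
$$\bar g=\frac{\ell}{(1+\ell f(x))^{2}}dx^{2}+\frac{2}{1+\ell f(x)}dxdy+\frac{f(x)}{1+\ell f(x)}dy^{2}.$$

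Finally, I would observe that $g_{a,\ell}=a\,\bar g$. Since multiplication of a pseudo-Riemannian metric by a nonzero constant only reparametrizes its geodesics, $g_{a,\ell}$ and $\bar g$ have the same unparametrized geodesics, hence so do $g_{a,\ell}$ and $g$. Every step is a routine computation; I do not foresee any serious obstacle. The only points worth checking are the sign $\det g=-1$ (which makes the ratio $\det g/\det J$ positive and well-defined as a square) and the non-vanishing of $\det J$, both of which are taken care of by the standing hypothesis $1+\ell f\neq 0$.
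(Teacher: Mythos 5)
Your proof is correct and follows essentially the same route as the paper: identify the Killing field $\partial_y$, form the Clairaut integral $C=dx+f\,dy$, apply Darboux's theorem via formula \eqref{eq_formule} to $J=g+\ell C^2$, and check that $1+\ell f\neq 0$ is exactly the non-degeneracy of $J$. The only cosmetic difference is that the paper absorbs the constant $a$ into the matrix identity of Lemma \ref{lem_matrice_equiv} (via the parameter $\beta$), whereas you recover general $a$ by noting that $g_{a,\ell}=a\,g_{1,\ell}$ and that constant rescaling does not change the Levi-Civita connection; both are valid.
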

\begin{proof} 
Let $a,\ell,z$ be numbers such that $1+\ell\,z\neq 0$, and
 \begin{eqnarray*} G_{a,\ell}(z)=a\begin{pmatrix}\frac{\ell}{(1+\ell\,z)^2} & \frac{1}{1+\ell\,z}\\ \frac{1}{1+\ell\,z} & \frac{z}{1+\ell\,z}\end{pmatrix}, \\
 Q_{a,\ell}(z)=\frac{a^2}{(1+\ell\,z)^2}\begin{pmatrix} 1 &z\\z&z^2\end{pmatrix}.
 \end{eqnarray*}
The proof follows directly from Theorem \ref{theo_classique} and   Lemma \ref{lem_matrice_equiv} applied to $G_{1,0}(f(x))$. \end{proof}
  \begin{lem}\label{lem_matrice_equiv}
 Setting $b=a\beta^{-3}$ and $m=\ell+\mu a$, we have:
 \begin{equation}\label{eq_matrice_equiv} \beta( G_{a,\ell }(z) +\mu \,Q_{a,\ell }(z))= \left(\frac{\det (G_{a,\ell }(z))}{\det(G_{b,m}(z))}\right)^{\frac{2}{3}}G_{b,m}(z).\end{equation} 
 \end{lem}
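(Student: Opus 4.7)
The plan is a direct matrix computation in two steps.

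\textbf{Step 1: the determinants.} Factoring $a/(1+\ell z)^2$ out of both rows of $G_{a,\ell}(z)$ leaves a $2\times 2$ matrix whose determinant is $\ell z(1+\ell z)-(1+\ell z)^2=-(1+\ell z)$. This yields
$$\det G_{a,\ell}(z)=-\frac{a^2}{(1+\ell z)^3},\qquad \det G_{b,m}(z)=-\frac{b^2}{(1+mz)^3}.$$
Using $a/b=\beta^3$, the scalar prefactor on the right-hand side of \eqref{eq_matrice_equiv} reduces to
$$\left(\frac{\det G_{a,\ell}(z)}{\det G_{b,m}(z)}\right)^{2/3}=\beta^{4}\,\frac{(1+mz)^2}{(1+\ell z)^2}.$$

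\textbf{Step 2: the perturbed metric.} I would pull out the common factor $a/(1+\ell z)^2$ from $G_{a,\ell}(z)+\mu Q_{a,\ell}(z)$, so the sum equals that factor times a matrix with entries $\ell+\mu a$, $1+\ell z+\mu a z$, and $z(1+\ell z)+\mu a z^2$. The only non-routine observation is that under the substitution $m=\ell+\mu a$ these three expressions collapse to $m$, $1+mz$, and $z(1+mz)$; this matrix is therefore $(1+mz)^2/b$ times $G_{b,m}(z)$. Collecting yields
$$G_{a,\ell}(z)+\mu Q_{a,\ell}(z)=\frac{a(1+mz)^2}{b(1+\ell z)^2}\,G_{b,m}(z)=\frac{\beta^{3}(1+mz)^2}{(1+\ell z)^2}\,G_{b,m}(z),$$
and multiplying by $\beta$ reproduces the prefactor computed in Step 1, which settles \eqref{eq_matrice_equiv}.

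There is no genuine obstacle here; the difficulty is purely notational, and the proof amounts to tracking factors of $a$, $b$, $\beta$, $(1+\ell z)$ and $(1+mz)$. The reason the identity holds is conceptual: the Clairaut perturbation $G_{a,\ell}(z)+\mu Q_{a,\ell}(z)$ of the Darboux normal form stays in the same normal form up to a conformal factor once one changes the parameter from $\ell$ to $m=\ell+\mu a$, and this conformal factor is exactly compensated by the determinant ratio appearing in Theorem \ref{theo_classique}.
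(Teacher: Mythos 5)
Your computation is correct and follows essentially the same route as the paper: both sides are reduced by direct computation, using $\det G_{a,\ell}(z)=-a^2/(1+\ell z)^3$ and the observation that the entries of $G_{a,\ell}(z)+\mu Q_{a,\ell}(z)$ collapse to those of $G_{b,m}(z)$ (up to the scalar $a(1+mz)^2/(b(1+\ell z)^2)$) under the substitution $m=\ell+\mu a$. Nothing is missing.
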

 \begin{proof}
 It is a direct computation, on one hand we have:
$$ \beta( G_{a,\ell }(z) +\mu \,Q_{a,\ell }(z))=
\frac{\beta a}{(1+\ell\,z)^2}
\begin{pmatrix}
\ell+\mu a & 1+ z(\ell+\mu a)\\
1+z(\ell+\mu a) &  z(1+ z(\ell+\mu a)) 
\end{pmatrix}
$$
and on the other hand, as $\det(G_{a,\ell }(z))=\frac{-a^2}{(1+\ell z)^3}$, we have:

$$\left(\frac{\det (G_{a,\ell }(z))}{\det(G_{b,m}(z))}\right)^{\frac{2}{3}}G_{b,m}(z)=\left(\frac{a}{b}\right)^{\frac{4}{3}} \frac{(1+m z)^2}{(1+lz)^2} \frac{b}{(1+m z)^2} \begin{pmatrix}
m & 1+mz\\    (1+mz) &    z(1+mz)                                                                                                                                                                   
 \end{pmatrix}.
$$
When $b=a\beta^{-3}$ and $m=\ell+\mu a$, these two matrices are clearly equal.
\end{proof}
\begin{rema}\label{rem_perp}
  The distribution orthogonal for $g_{a,\ell}$ to the Killing field does not depend on $a$ and $\ell$. It means in particular that these metrics have the same ``generic reflections''. A generic reflection of a surface with a Killing $K$ is a local isometry fixing a non degenerate geodesic perpendicular to $K$ and sending $K$ to $-K$ (as a consequence it  permutes the lightlike foliations). In general it is only defined on the saturation of the geodesic by the flow of $K$, see \cite{BetM} for details. 
  
  The metric $g_{a,\ell}$ has a lightlike geodesic cutting each lightlike orbit of $K$ (what is called a ``ribbon'' in \cite{BetM}), it implies that there exits coordinates $(u,v)$ and a function $f_{a\ell}$ such that it reads $2dudv + f_{a,\ell}(u) dv^2$. According to \cite{BetM}, if $K$ has a lightlike orbit, each of these metrics extends into a bigger surface denoted $E^u_{f_{a,\ell}}$ which is only determined by the function $f_{a,\ell}$. These surfaces are still projectively equivalent (some could even be isometric). We can note that it defines an equivalence  relation on the set of real functions. 
 \end{rema}
\subsection{Surfaces without conjugate points}
Theorem \ref{theo_classique} allows us to extend the family of metrics on $\R^2$ inducing tori without conjugate points. 
\begin{theo}\label{theo_conj}
 There exists a $2$-dimensional family of Lorentzian tori without conjugate points with pairwise non isometric universal cover.
\end{theo}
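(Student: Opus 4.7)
My strategy applies Proposition \ref{prop_proj_bande} to a metric already known to descend to a torus without conjugate points, and harvests the two-parameter family of projectively equivalent metrics that it produces.

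First I would take a smooth $\Z$-periodic function $f:\R\to\R$ such that $g := 2\,dx\,dy + f(x)\,dy^2$ descends to a Lorentzian metric on $\T = \R^2/\Z^2$ without conjugate points; such an $f$ is provided by \cite{BetM1}. For every $(a,\ell)\in \R^*\times\R$ with $\inf_{x}|1+\ell f(x)|>0$, Proposition \ref{prop_proj_bande} then furnishes a metric $g_{a,\ell}$ projectively equivalent to $g$. Its determinant equals $-a^2/(1+\ell f)^3$, which is negative and bounded away from zero, so $g_{a,\ell}$ is Lorentzian; since its coefficients depend only on $x$ and are $\Z$-periodic in $x$, it descends to $\T$. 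This already yields a two-dimensional family of Lorentzian tori (parametrized by $(a,\ell)$).

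Next I would invoke the fact that the absence of conjugate points is a projective invariant. Indeed, the conjugacy of $q$ to $p$ along a geodesic $\gamma$ is equivalent to the existence of a one-parameter family of geodesics through $p$, tending to $\gamma$, whose other endpoints tend to $q$; this is a property of the unparametrized geodesic foliation, which is the same for $g$ and $g_{a,\ell}$. Hence the universal cover $(\R^2,\tilde g_{a,\ell})$, being projectively equivalent to $(\R^2,\tilde g)$, inherits the absence of conjugate points.

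The remaining and, in my view, main difficulty is to show that the universal covers are pairwise non-isometric on an open two-dimensional subregion of parameters. I would use the scalar curvature $\scal_{a,\ell}$ as an isometric invariant: a direct computation (analogous to $\scal_g = f''(x)$ for the original metric) gives $\scal_{a,\ell}$ as an explicit function of $f$, $f'$, $f''$ and $(a,\ell)$. The rescaling identity $c\,g_{a,\ell} = g_{ca,\ell}$ shows that varying $a$ multiplies $\scal$ by $1/c$, while varying $\ell$ deforms the functional form in a way which is not just rescaling; for generic $f$ (say with a strict maximum), the Killing field $\partial_y$ is the unique continuous symmetry up to scale, so any isometry between two members of the family must preserve the Killing orbit structure and hence the restricted scalar curvature profile. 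The real obstacle is excluding exotic isometries of $\R^2$ that do not descend to the torus: this reduces, via the preceding rigidity, to checking that the map $(a,\ell)\mapsto \scal_{a,\ell}$ is injective modulo the obvious affine reparametrizations of the Killing coordinate, which follows from the explicit form of $\scal_{a,\ell}$ for a sufficiently generic choice of $f$.
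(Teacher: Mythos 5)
Your overall strategy is the one the paper uses: start from a no-conjugate-points example of \cite{BetM1}, deform it through the two-parameter family of Proposition \ref{prop_proj_bande}, invoke projective invariance of conjugate points, and separate the metrics by curvature invariants attached to the Killing field. (The paper starts from the Clifton--Pohl plane $(\R^2\smallsetminus\{0\},\frac{2}{x^2+y^2}dxdy)$ with its radial Killing field rather than from a periodic $f$ on $\R^2$, but that is a cosmetic difference.) However, two steps in your write-up are genuinely incomplete. First, your justification of the projective invariance of conjugate points is not correct as stated: a conjugate point is defined by a Jacobi field vanishing at both ends, i.e.\ an \emph{infinitesimal} variation through geodesics, not by an actual refocusing one-parameter family of geodesics, and in the Lorentzian setting (indefinite index form, spacelike and lightlike geodesics) the equivalence you assert is exactly the nontrivial content. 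This fact is true but needs a reference or a proof; the paper cites Corollary 5.6 of \cite{Piccione}.

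Second, and more seriously, the step you yourself identify as the main difficulty --- pairwise non-isometry of the universal covers --- is not actually carried out. You reduce it to "the map $(a,\ell)\mapsto \scal_{a,\ell}$ is injective modulo reparametrization, which follows from the explicit form of $\scal_{a,\ell}$ for a sufficiently generic choice of $f$", but you never compute $\scal_{a,\ell}$ or prove the injectivity, and the appeal to genericity is circular: $f$ is not free, it is constrained to be one of the special functions producing a torus without conjugate points, so you must verify the separation for that specific $f$. The paper closes this gap concretely: any isometry must send $K$ to $\pm cK$ (the Killing/projective algebra being one-dimensional since the metric is non-flat), hence it preserves the set of lightlike orbits of $K$ and the set where $g(K,K)$ is maximal; evaluating the curvature on these two distinguished sets gives the explicit invariants $-a^2\ell$ and $-2a^2(a+\ell)$, and the resulting system $a^2\ell=a'^2\ell'$, $a^2(a+\ell)=a'^2(a'+\ell')$ forces $(a,\ell)=(a',\ell')$ for $a,a'>0$. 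Some such explicit computation is indispensable; without it the theorem is not proved.
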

\begin{proof}
 The main result of \cite{BetM1} asserts that the Clifton-Pohl plane: $(\R^2\smallsetminus\{0\},\frac{2}{x^2+y^2}dxdy)$ has no conjugate points. If follows from the discussion above that there exists a two dimensional space of metrics projectively equivalent metrics the Clifton-Pohl plane.
Corollary 5.6 of \cite{Piccione} implies that  having conjugate points is a projective property of  Lorentzian surfaces, therefore none of these metrics has conjugate points. The radial field of $\R^2$ is a Killing field of each of these metrics, therefore they all have a quotient diffeomorphic to the torus.
  
 From \eqref{eq_formule},  these metrics reads on $\R^2\smallsetminus \{0\}$:
 $$
 g_{a,\ell}=\left(\frac{1}{2abxy+a^2(x^2+y^2)}\right)^2 by^2dx^2 +2(bxy+a(x^2+y^2))dxdy + bx^2dy^2,
 $$
 where $a,\ell$ are real numbers satisfying $-a<\ell<a$. 
 We prove now that  $g_{a,\ell}$ is isometric to $g_{a',\ell'}$ if and only if $|a|=|a'|$ and $\ell=\ell'$. 
 As the map $(x,y)\mapsto (-y,x)$ sends $g_{a,\ell}$ on $g_{-a,\ell}$, we can assume now that $a$ and $a'$ are both positive.
  For any $a,\ell$, we have $g_{a,\ell}(K,K)=0$ if and only if $xy=0$.  Computing the curvature $\rho_{a,\ell}$ at these points we find $-a^2\ell$. As $a>0$, the norm of $K$ is maximal when $x=y$ and $\rho_{a,\ell}(x,x)=-2a^2(a+\ell)$.
 If there exits an isometry $\Phi$ between $g_{a,\ell}$ and $g_{a',\ell'}$, it preserves the set of lightlike orbits of $K$ and the set where $K$ has maximal norm. Consequently we have:
 $$
 \left\{
 \begin{array}{l}
 a^2\ell=a'^2\ell'\\
 a^2(a+\ell)=a'^2(a'+\ell')
 \end{array}
 \right.
 $$
 and therefore $(a,\ell)=(a',\ell')$.
\end{proof}
\begin{rema} 
Each metric $g_{a,\ell}$ admits a non isometric projective transformation (which is affine if $\ell=0$): the involution $(x,y)\mapsto (-y,x)$. It will follow from Theorem \ref{theo_fini} that the index of the isometry group of the Lorentzian torus associated to $g_{a,\ell}$  (which is equal to its affine group when $\ell\neq 0$) in its projective group is $2$.
\end{rema}
\subsection{Surfaces all of whose spacelike geodesics are closed}\label{sec_zoll}
Another interesting family of Lorentzian surfaces is given by the set of surfaces all of whose spacelike geodesics are closed. Some of them have also a Killing field, see \cite{MounetSuhr_jdg}, therefore there exists metrics projectively equivalent to them. However, if two surfaces all of  whose spacelike geodesics are closed are projectively equivalent then they have the same lightlike geodesics. Indeed, the lightlike geodesics are the first non closed geodesics of the surface, see \cite{MounetSuhr_MZ}. It means they are in the same conformal class, but two conformal metrics that are projectively equivalent are proportional, see \cite[Theorem 1.2.3]{Taber} for example. Hence, the metrics obtained always have a non closed spacelike geodesic. We  easily deduce from this fact that a projective transformation of a Lorentzian surface all of whose spacelike geodesics are closed is an isometry.

Let us see what happens in the constant curvature case ie on the de Sitter surface. Let $q$ be quadratic form on $\R^3$ of signature $(2,1)$. We recall that de Sitter surface is (isometric to) the quadric $S_q=q^{-1}(1)$ endowed with the metric induced by $q$. Its geodesics are thus the intersections of the vectorial planes with $S_q$. Let  $q_1$ and $q_2$ be two non degenerate quadratic forms, the signature of $q_1$ being $(2,1)$. The radial field of $\R^3$ defines a projection from a subset of $S_{q_1}$ to $S_{q_2}$. In general it is not everywhere defined, but it always sends geodesics of $S_{q_1}$ on geodesics of $S_{q_2}$. 
When the lightcone of $q_2$ is contained in the lightcone of $q_1$ then it is everywhere defined but not surjective. Otherwise said $S_{q_1}$ is projectively equivalent to a proper open subset of $S_{q_2}$. 
Hence, the de Sitter surface is projectively equivalent to surfaces that 
do not have all their spacelike geodesics closed but have an extension having this property. Note that we did not assume $q_2$ to be of Lorentzian type, it could be positive definite.
We wonder if any surface projectively equivalent to (an open subset of) a metric all of whose spacelike geodesic are closed can be extended into a metric all of whose  spacelike (or timelike) geodesics are closed. 

Following \cite{Besse}, we give the following definition (we warn the reader that the expression Tannery surface is sometimes used with a slightly different meaning, without reference to a Killing field).
\begin{defi}\label{def_tan}
 An pseudo-Riemannian surface is said to be a Tannery surface, if it is orientable, it has a periodic spacelike \footnote{in the Riemannian case, we call  any non zero vector spacelike} Killing field, and  if all its spacelike geodesics are closed except possibly the one orthogonal to the Killing field (in the Riemannian case).
 \end{defi}
Lorentzian Tannery surfaces correspond to the surfaces of elliptic type of \cite{MounetSuhr_jdg}.
Next theorem says in particular that Tannery surfaces have a maximal projective (Tannery) extension.
\begin{theo}\label{theo_zoll}
 Any Lorentzian Tannery surface is projectively equivalent to an open subset of a Riemannian Tannery surface. Reciprocally any Riemannian Tannery surface contains a subset projectively equivalent to a Lorentzian Tannery surface.
\end{theo}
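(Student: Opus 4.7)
My plan is to use the Clairaut family of projectively equivalent metrics from equation \eqref{eq_formule} to switch the signature, then use the structure theory of Tannery surfaces for the extension step.

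For the forward direction, let $(M, g)$ be a Lorentzian Tannery surface with periodic spacelike Killing field $K$. In the normal form $g = 2dxdy + f(x)dy^2$ with $f > 0$, a direct determinant computation gives $\det g_{a,\ell} = -a^2/(1+\ell f)^3$; choosing $a < 0$ and $\ell < -1/\inf f$ forces $1 + \ell f < 0$ throughout and turns $g_{a,\ell}$ into a Riemannian metric. The substitution $v = y + \int dx/f$ brings it into surface-of-revolution form $A(x)\,dx^2 + B(x)\,dv^2$ with $A, B > 0$, and by Proposition \ref{prop_proj_bande} it is projectively equivalent to $g$. Since unparametrized geodesics of $g$ and $g_{a,\ell}$ coincide, the closed spacelike geodesics of $g$ are closed $g_{a,\ell}$-geodesics. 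To promote $(M, g_{a,\ell})$ to a Riemannian Tannery surface, I would invoke the classification of Lorentzian surfaces of elliptic type from \cite{MounetSuhr_jdg} to pin down the asymptotics of $f$, then verify that $B$ vanishes to second order at each end of the $x$-interval with the correct coefficient. This lets one add two smooth rotation poles, yielding a compact Riemannian Tannery surface into which $(M, g)$ embeds projectively as the complement of the poles---the higher-genus analogue of the de Sitter $\to$ round sphere correspondence recalled above.

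For the converse, given a Riemannian Tannery surface $(M', \bar g)$ with periodic Killing field $\bar K$, I would apply the Clairaut construction \eqref{eq_formule} to $\bar g$ in its normal form $du^2 + h(u)^2 dw^2$, now choosing the parameter $t$ so that $1+th^2$ becomes negative on a proper open subset $U \subset M'$ (avoiding neighborhoods of the poles of $\bar K$). On $U$ the resulting metric is Lorentzian with $\bar K$ as a periodic spacelike Killing field, and its spacelike geodesics coincide as unparametrized curves with (closed) geodesics of $\bar g$; hence $U$ endowed with this new Lorentzian metric---after a suitable $\bar K$-invariant quotient if needed---is the desired Lorentzian Tannery subset projectively equivalent to an open piece of $(M', \bar g)$.

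The main obstacle is the smooth extension step in the forward direction: ensuring that the Riemannian metric compactifies across the added poles as a genuine smooth surface rather than acquiring cone singularities. Controlling the leading order of $B$ at the ends requires the explicit normal forms of \cite{MounetSuhr_jdg}, and constitutes the principal technical content of the proof; the converse direction is comparatively straightforward once the Clairaut formula is in hand.
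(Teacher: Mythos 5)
Your converse direction is essentially the paper's argument (Clairaut deformation of the Besse normal form of a Riemannian Tannery surface, made Lorentzian on a band avoiding the poles), modulo the check you gloss over: one must verify that the $\bar g$-geodesics that are \emph{spacelike} for the new metric are exactly those whose Clairaut constant confines them to the band (hence closed, and not the exceptional meridians), while the geodesics that cross the boundary parallels become timelike or lightlike. The paper does this by computing that a unit $\bar g$-geodesic is lightlike for the new metric precisely when it is tangent to the boundary parallels.

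The forward direction, however, has a genuine gap at the compactification step. With $g=2dxdy+f(x)dy^2$ and $a<0$, $\ell<-1/\inf f$, your diagonalized Riemannian metric has $B=\frac{af}{1+\ell f}$, and on a Lorentzian Tannery (elliptic type) surface one has $\sup g(K,K)=+\infty$ (cf.\ \cite{MounetSuhr_jdg}), so at the ends of the cylinder $f\to+\infty$ and $B\to a/\ell\neq 0$: the circles of revolution do \emph{not} shrink, so you cannot cap the ends with two poles. This matches the explicit picture in the paper: the Lorentzian surface corresponds to the band $r\in\left]\pi/4,3\pi/4\right[$ of the Riemannian Tannery sphere, whose boundary circles have radius $\sin(\pi/4)\neq 0$; the complement is two open caps plus the poles, not just the poles. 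The real work is therefore to extend the incomplete Riemannian cylinder across two collar regions (which have no counterpart in the Lorentzian surface) all the way to a Tannery sphere, keeping all geodesics closed. The paper achieves this by going the other way: it starts from the globally understood Besse normal form $\left(\frac{p}{q}+h(\cos r)\right)^2dr^2+\sin^2 r\,d\theta^2$, restricts to the band, and then shows via the explicit substitution $\sin^2 x(t)/(1-2\sin^2 x(t))=-\cosh^2 t$ that the resulting Lorentzian normal forms exhaust all Lorentzian Tannery surfaces (an adaptation of Besse, ch.\ 4). Some such surjectivity/normal-form matching is unavoidable, and your proposal does not supply it.
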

\begin{proof}
 According to \cite[Theorem 4.13]{Besse}, if $g_0$ is a Tannery surface then there exists global  coordinates $(r,\theta) \in ]0,\pi[\times S^1$ on the surface, $\frac pq\in \Q$  and an odd function $h$ such that $g_0$ reads:
 $$g_0=\left(\frac{p}{q}+h(\cos r)\right)^2dr^2+\sin^2r\, d\theta^2.$$
 It follows from Theorem \ref{theo_classique}, that for any $\ell\in \R$ the metric
 $$\frac{-1}{(1+\ell \sin^2 r)^2}\left(\left(\frac{p}{q}+h(\cos r)\right)^2dr^2+\sin^2r(1+\ell \sin^2 r)\, d\theta^2 \right) $$
 is projectively equivalent to $g_0$ on its domain of definition. If we restrict $r$ to $]\pi/4,3\pi/4[$ then $\sin^2 r\in ]1/2,1[$, we can therefore take $\ell=-2$. The metric $g_1$  obtained is Lorentzian.
 A unitary geodesic $\gamma$ of $g_0$ is lightlike for $g_1$ if and only if 
 $$1-2\sin^4 (r) \dot \theta^2=0.$$
 Moreover, $\gamma$ is tangent to the parallels $\{r=\pi/4\}$ and $\{r=3\pi/4\}$ if and only if $g_0(\dot\gamma,\partial_\theta)= \pm \sin(\pi/4)$. But $g_0(\dot\gamma,\partial_\theta)=\sin^2(r) \dot\theta$ so these conditions are the same. It means that all the spacelike geodesics of $g_1$ are closed.
\hop 

Let us see now that the metric $g_1$ can be any Lorentzian Tannery surface. Let $x:\R\rightarrow \R$ defined by 
$$
x(t)=\left\{\begin{aligned}
&             \arcsin\left(\sqrt{\frac{-\cosh(t)^2}{1-2\cosh(t)^2}}\right),&\text{if}\ t\leq 0\\
&             \pi- \arcsin\left(\sqrt{\frac{-\cosh(t)^2}{1-2\cosh(t)^2}}\right),&\text{if}\ t\geq 0.
            \end{aligned}
%
\right.
$$
 It is smooth and made to satisfy $\frac{\sin^2(a(t))}{1-2\sin^2(a(t))}=-\cosh^2(t)$.
In the coordinates $(t,\theta)$, the metric reads: 
$$-\left(\frac{p}{q}+(h\circ v) (\sinh t)\right)^2dt^2 + \cosh^2(t)\, d\theta^2,$$
where $v(s)=\frac{s}{\sqrt{1+2s^2}}$. The function $h\circ v$ being also odd and $v$ being a diffeomorphism,
by a trivial, but rather long, adaptation of chapter 4 of \cite{Besse} (unfortunately it is not the description used in \cite{MounetSuhr_jdg}), we see that any Lorentzian Tannery  surface can be written this way.
\end{proof}
One half of Theorem \ref{theo_zoll} is easily generalizable to Lorentzian surfaces with an other type of Killing field.
\begin{prop}\label{prop_contains}
 Let $(M,g)$ be an orientable Lorentzian surface with a Killing $K$. If the  spacelike geodesics  of $g$ are all closed, then $(M,g)$ contains a proper open subset projectively equivalent to a Lorentzian surface all of whose spacelike geodesics are closed. 
\end{prop}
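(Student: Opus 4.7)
The plan is to apply the deformation of Proposition \ref{prop_proj_bande} to the Killing field $K$ with a negative parameter $\ell$, restricted to the region where the deformed metric remains Lorentzian. Taking $\ell<0$ is designed to force every spacelike geodesic of the new metric already to be $g$-spacelike, and hence closed by hypothesis.

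Set $C(v) = g(K,v)$; by Lemma \ref{lem_clairaut} this is a linear integral of the geodesic flow of $g$, so $J := g + \ell\, C\otimes C$ is a quadratic integral for every $\ell\in\R$. A short rank-one perturbation calculation yields $\det J = (1+\ell g(K,K))\det g$, so on the open set $U_\ell := \{p\in M : 1+\ell g(K,K)(p)>0\}$ the form $J$ is non-degenerate and formula \eqref{eq_formule} produces a Lorentzian metric $\bar g_\ell$ projectively equivalent to $g|_{U_\ell}$. In a chart where $g = 2dxdy+f(x)dy^2$ and $K=\partial_y$, this $\bar g_\ell$ is exactly the metric $g_{1,\ell}$ of Proposition \ref{prop_proj_bande}, and the sign of $\bar g_\ell(v,v)$ agrees with that of $J(v) = g(v,v) + \ell g(K,v)^2$.

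Now fix $\ell<0$. A $g$-geodesic $\gamma$ is $\bar g_\ell$-spacelike iff $g(\dot\gamma,\dot\gamma) > |\ell|\, g(K,\dot\gamma)^2 \geq 0$, which forces $\gamma$ to be $g$-spacelike. Rescaling so that $g(\dot\gamma,\dot\gamma)=1$ and setting $c := g(K,\dot\gamma)$ (constant along $\gamma$), the condition becomes $c^2 < 1/|\ell|$. On any $2$-dimensional Lorentzian vector space the Gram determinant $g(K,K)g(v,v) - g(K,v)^2$ is a non-positive multiple of $\det g < 0$, giving the reverse Cauchy--Schwarz bound $g(K,v)^2 \geq g(K,K)g(v,v)$; along $\gamma$ this reads $g(K,K)(\gamma(s)) \leq c^2 < 1/|\ell|$, so $\gamma\subset U_\ell$. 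Since by hypothesis every $g$-spacelike geodesic is closed, every $\bar g_\ell$-spacelike geodesic of $(U_\ell,\bar g_\ell)$ is closed.

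It remains to choose $|\ell|$ large enough that $\{g(K,K)\geq 1/|\ell|\}$ is non-empty, which is possible as soon as $K$ is spacelike at some point of $M$; then $U_\ell$ is a proper open subset. The main obstacle I foresee is the case where $K$ is nowhere spacelike: there one has to argue separately, either by running the analogous $\ell>0$ construction on the region $\{g(K,K)<0\}$ and using the hypothesis to control the extra spacelike geodesics of $\bar g_\ell$, or by invoking a classification result such as \cite{MounetSuhr_jdg} to show that a Lorentzian surface whose spacelike geodesics are all closed and which admits a Killing field must have $K$ spacelike at some point.
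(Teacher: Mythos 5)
Your construction is exactly the paper's: it sets $U=\{\,g(K,K)<\ell^2\,\}$, $J=g-\ell^{-2}C^2$, and argues precisely as you do that a $\bar g$-spacelike geodesic is a $g$-spacelike geodesic whose Clairaut constant traps it inside $U$, hence closed. The obstacle you flag at the end is resolved exactly as you suggest: \cite[Propositions 3.2 and 3.4]{MounetSuhr_jdg} give $\sup g(K,K)=+\infty$ and the connectedness of $U$, so $U$ is a proper, nonempty, connected open set; the only other slip is cosmetic — the Gram determinant of $(K,v)$ is a non-\emph{negative} multiple of $\det g<0$, which is what yields the reverse Cauchy--Schwarz inequality you then correctly state.
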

\begin{proof}
 It is proven in \cite[Propositions 3.2 and 3.4]{MounetSuhr_jdg} that $\sup g(K,K)=+\infty$ and that any spacelike geodesic is somewhere tangent to $K$. Let $\ell$ be a positive number and $U=\{p\in M\,|\,  g(K,K)<\ell^2\}$, it follows from \cite[Proposition 3.4]{MounetSuhr_jdg} that $U$ is connected. 
 The boundary of $U$ is given by a union of orbits of $K$. A unitary geodesic $\gamma$ is tangent to $\partial U$ if and only if $C(\dot \gamma)=\ell$,  where $C$ is the Clairaut integral associated to $K$.
  From Proposition \ref{prop_proj_bande}, we deduce that  $J=g-\frac{1}{\ell^2}C^2$ is non degenerate on $U$.  We endow $U$ with the metric $$\bar g=\left(\frac{\det(g)}{\det(J)}\right)^{\frac{2}{3}}J,$$ it is  projectively equivalent to $g$.  A vector $v$ such that $g(v,v)=1$ satisfies $J(v)=0$ if and only if $C(v)=\ell$. It implies as above that the spacelike geodesics of $(U,\bar g)$ are all closed. 
\end{proof}
\begin{rema}
Unfortunately, Proposition \ref{prop_contains} cannot be used to deform one of the spacelike Zoll surfaces given in \cite{MounetSuhr_jdg} into a spacelike Zoll surface of a different kind. Indeed, it is not difficult to see that the families of surfaces constructed there are precisely the surfaces having the same Clairaut integral as a metric of positive constant curvature. Therefore the metrics obtained have the same Clairaut integral as a metric obtained by deformation of a metric of positive constant curvature. But these metrics also have positive constant curvature and therefore the metrics obtained by deformation are also of the kind described in \cite{MounetSuhr_jdg}.
\end{rema}
\section{Projective transformations}
\subsection{Non compact surfaces, proof of Theorem \ref{theo_existe}}\label{sec_transfononcompact}

 We start by proving the theorem on $\R^2$.
 Let $f$ be $1$-periodic non negative and non constant  function and let $m$ be its maximum. Let $a$ be a real number  greater than $1$, let $\varepsilon$ be a positive number satisfying $\varepsilon< (a^3-1)/ma^3 $.
 
 Let $\alpha: [0,1]\rightarrow [1,a]$ be a smooth function equal to $1$ on a neighborhood of $0$ and equal to $a$ on a neighborhood of $1$. 
 Let $\lambda: [0,1]\rightarrow [0,1]$ be a smooth function equal to $0$ on a neighborhood of $0$ and equal to $\varepsilon$ on a neighborhood of $1$.

 For any $n\in \Z$, we define two functions $A_n$ and $\Lambda_n$ on $[n,n+1]$ by 
 $$A_n(x+n)=\alpha(x)a^n$$
 $$\Lambda_n(x+n)= \lambda(x)+\varepsilon\,\frac{1-a^{3n}}{1-a^3}\alpha^3(x)$$
 Let $A$ (respectively  $\Lambda$) be the function such that for any $n\in \Z$ the restriction of $A$ (resp. $\Lambda$) to $[n,n+1]$ is equal to $A_n$ (resp. $\Lambda_n$). The functions  $A$ and $\Lambda$ are smooth. We see also that for any $x\in \R$, $\Lambda(x)>-\frac{1}{m}$ and therefore $1+ \Lambda(x)f(x)>0$. Hence the tensor defined by
 $$g = A^3(x)\left(\frac{\Lambda(x)}{(1+ \Lambda(x)f(x))^2} dx^2 +\frac{2}{1+\Lambda(x) f(x)}dxdy +\frac{f(x)}{1+\Lambda(x)f(x)}dy^2\right)$$
 is a Lorentzian metric on $\R^2$.
 
When $\ell=\Lambda(x)$, $a=A^3(x)$ and $z=f(x)$, the matrix $G_{a,\ell }(z)$ defined in section \ref{section_examples} is equal to $G_x$ the matrix of  $g$ at any point $(x,y)$ and $Q_{a,\ell }(z)$ is the matrix of the square of Clairaut integral associated to the Killing field $\partial_y$.

Let $x$ be a point of $[0,1]$. Notice that for any $n\in \Z$, $A(x+n)=a^{3n}A(x)$ and $\Lambda(x+n)=\Lambda(x)+\varepsilon \frac{1-a^{3n}}{1-a^3}A^3(x)$. Therefore relation  \eqref{eq_matrice_equiv} reads  
$$a^{-n}(G_x+\varepsilon \frac{1-a^{3n}}{1-a^3} C^2(x))=\left(\frac{\det (G_x)}{\det(G_{x+n})}\right)^{\frac{2}{3}}G_{x+n},$$
where $C$ is the Clairaut integral.
Thus Theorem \ref{theo_classique} 
implies that the map $\tau: (x,y)\mapsto (x+1,y)$ is a projective map. It is clearly not an isometry. 
As $g$ is not flat the only parallel endomorphisms of the tangent bundle are the multiples of the identity (see Fact page 2226 of \cite{BoubetMou} for a proof). 
It means that any metric having the same Levi-Civita connection as $g$ is proportional to $g$.
However, the vector field $\partial_x$ is lightlike when $x=0$ but not when $x=1$, so $\tau^*g$ is not proportional to $g$. Consequently $\tau$ is not an affine map.
\hip
If $\bar g$ is a metric projectively equivalent to $g$ then Theorem \ref{theo_classique} says that it defines an integral $I$ of the geodesic flow of $g$. If $I$ were not a linear combination of the energy and Clairaut integral, then by a straightforward adaptation of Lemma 2 of \cite{Bryant_Matveev}, the dimension of the space of projective vector fields of $g$ would be at least $2$. But Theorem 1 of \cite{Bryant_Matveev} gives a local normal form for such metrics. It is clearly possible to find a function $f$ such that the metric $g$ is not isometric to one of these metrics (in fact the opposite seems to be impossible).  Consequently, $\tau$ does not preserve $\bar g$.
\hop

We choose now $f$ such that $f^{-1}(0)=\Z$. We choose also a maximal geodesic $\gamma_0$ in $]0,1[\times \R $ perpendicular to $\partial_y$. For any $n\in \Z$, we denote by $\gamma_n$ the image of $\gamma_0$ by $\tau^n$. Each curve $\gamma_n$ is perpendicular to $\partial_y$, see Remark \ref{rem_perp}. Let $\sigma_n$ be the generic reflection associated to $\gamma_n$, ie the isometry of $]n,n+1[\times \R$ sending $\partial_y$ to $-\partial_y$ that fixes $\gamma_n$. We clearly have 
\begin{equation}\label{eq_commute}
\tau\circ\sigma_n=\sigma_{n+1}\circ \tau.
\end{equation}
We consider then the Lorentzian manifold $X$ obtained by gluing two copies of $(\R^2,g)$ along each set $]n,n+1[\times \R$ thanks to $\sigma_n$ (the fact that $X$ is a Hausdorff Lorentzian surface is proven in \cite[Remarque 3.4]{BetM}).  It follows from \eqref{eq_commute} that $\tau$ induces a well defined projective transformation of $X$. Finally, Proposition 4.31 of \cite{BetM} tells us that any non compact, orientable surface of finite topological type can be realized as a quotient of the universal cover of $X$. Equation \eqref{eq_commute} tells us that the lift of $\tau$ belongs to the normalizer of the group of isometries used to construct these surfaces (the so-called "generic subgroup" in \cite{BetM}), therefore $\tau$ induces a projective transformation on them.

\begin{rema}
We denote by  $\Gamma$  the group generated by $\tau$ and the map $(x,y)\mapsto (x,y+1)$. This group preserves the projective connection of $(\R^2,g)$ (the metric defined in the proof of Theorem \ref{theo_existe}). Hence the torus $\R^2/\Gamma$ is a torus endowed with a projective connection that is locally but not globally Lorentzian.

The function $g(\partial_y,\partial_y)$ tends always to $0$ when $x$ goes to $-\infty$, therefore, at least  when the geodesic flow of $g$ does not admit another first integral (what is probably always the case), there is no Riemannian metric projectively equivalent to $g$.
\end{rema}
\subsection{Compact surfaces, proof of Theorem \ref{theo_fini}}\label{subsec_transfocompact}
The starting point of the proof of Theorem \ref{theo_fini} is the fact proven by 
 Matveev  see \cite[Theorem 2]{Matveev_au_japon} that a compact Lorentzian surface whose geodesic flow admits a quadratic integral  either admits a Killing field or is of Liouville type (see below for a definition). 

Let us assume that there exits a non flat Lorentzian torus $\T$ whose isometry group has an index greater than two in its projective group. Let us assume first that it has also a non trivial Killing field $K$.
\begin{lem}\label{lem_KsurK}
 Let $g$ be a non flat Lorentzian metric on $\T$ that possess a non trivial Killing field $K$. If the index of  ${\rm Isom}(\T)$ in ${\rm Proj} (\T)$ is greater than 2, then  
$g(K,K)$ is non negative or non positive  and  vanishes and  $\Phi_*K=\pm K$ for any $\Phi\in {\rm Proj}(\T)$. 
\end{lem}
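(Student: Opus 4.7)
The plan is to first show that the pushforward map $\Phi\mapsto\Phi_*K$ takes values in $\R K$ (so $\Phi_*K=\lambda K$ for some $\lambda\in\R^*$), and then to extract from Proposition \ref{prop_proj_bande} a Möbius functional equation for $f:=g(K,K)$ that delivers both the sign/vanishing statement and, with more work, the normalization $\lambda=\pm 1$.

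\medskip\noindent
\emph{Step 1: $\Phi_*K$ is a Killing field.} Conjugation by the projective diffeomorphism $\Phi$ preserves the Lie algebra of projective vector fields; since $K$ is Killing (hence affine, hence projective), $\Phi_*K$ is projective. Theorem \ref{BMR} promotes it to affine. On a non-flat compact Lorentzian surface every affine vector field is Killing: in dimension two $\mathrm{Ric}=\kappa g$, so $L_X\mathrm{Ric}=0$ yields $L_X g=-\frac{X(\kappa)}{\kappa}g$ on $\{\kappa\neq 0\}$; on the other hand, since $L_X\nabla=0$ the two-form $L_X\mathrm{vol}_g$ is parallel, hence a constant multiple of $\mathrm{vol}_g$, and that constant vanishes on the compact $\T$ by the divergence theorem, so $\mathrm{div}(X)=0$; tracing the previous conformal equation then forces $X(\kappa)=0$, hence $L_X g=0$ on the dense open set $\{\kappa\neq 0\}$, and by continuity everywhere. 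Furthermore, the Killing algebra of a non-flat compact Lorentzian surface is one-dimensional (a two-dimensional algebra would act locally transitively, giving constant curvature, which on the torus forces flatness). Thus $\Phi_*K=\lambda K$ for some $\lambda\in\R^*$.

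\medskip\noindent
\emph{Step 2: sign and vanishing of $f$.} Choose coordinates $(x,y)$ adapted to $K=\partial_y$ so that $g=2dx\,dy+f(x)\,dy^2$; by Proposition \ref{prop_proj_bande}, $\Phi^*g$ takes the form $g_{a,\ell}$ for some $(a,\ell)\in\R^*\times\R$. Computing $(\Phi^*g)(K,K)$ in two ways,
\[
g_{a,\ell}(K,K)=\frac{af}{1+\ell f}\quad\text{and}\quad(\Phi^*g)(K,K)(p)=g(\Phi_*K,\Phi_*K)(\Phi(p))=\lambda^2\,f(\Phi(p)),
\]
I obtain the functional equation $f\circ\Phi=T(f)$ with $T(z)=(a/\lambda^2)\,z/(1+\ell z)$. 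Because $1+\ell f$ does not vanish (Lorentzian signature of $g_{a,\ell}$), $T$ is strictly monotone on the image of $f$. That image, a compact interval $[m,M]$, must then be mapped onto itself by $T$, so $T(m)=m$ and $T(M)=M$. But $T$ has at most the two fixed points $0$ and $z_0:=(a/\lambda^2-1)/\ell$, so $\{m,M\}=\{0,z_0\}$: one endpoint is $0$, which is precisely the statement that $f$ has constant sign and vanishes.

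\medskip\noindent
\emph{Step 3: $\Phi_*K=\pm K$.} This is the main obstacle. The assignment $\Phi\mapsto T_\Phi$ is a group homomorphism from ${\rm Proj}(\T)$ into the subgroup of Möbius transformations fixing $\{0,M\}$ pointwise (a copy of $\R^*$), whose kernel is exactly ${\rm Isom}(\T)$: if $T_\Phi=\mathrm{id}$ then $\ell=0$ and $a=\lambda^2$, whence $\Phi^*g=g$. Consequently ${\rm Proj}(\T)/{\rm Isom}(\T)$ embeds into $\R^*$; a finite subgroup of $\R^*$ has order at most two, so the hypothesis of index $>2$ forces this quotient to be infinite. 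From the fixed-point identity $M=(a/\lambda^2-1)/\ell$ one reads off $\lambda^2=a/(1+\ell M)$, so the desired $\lambda^2=1$ reduces to the arithmetic relation $a_\Phi=1+\ell_\Phi M$ for every $\Phi$. I expect to deduce this by iterating $\Phi$ and applying the composition law for the parameters $(a,\ell)$ encoded in Lemma \ref{lem_matrice_equiv}, together with the global constraint $\int_\T|a_\Phi|(1+\ell_\Phi f)^{-3/2}\,dx\,dy=\mathrm{vol}(\T)$ coming from comparing the volume forms of $g$ and $\Phi^*g$; ruling out a one-parameter family of projective maps that non-trivially rescales $K$ is the principal technical difficulty of the lemma.
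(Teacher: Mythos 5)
Your Step 2 is essentially the paper's argument for the sign/vanishing statement: both reduce to the observation that $\Phi$ induces on the values of $f=g(K,K)$ a M\"obius map $z\mapsto az/(1+\ell z)$ that must fix the extremal values $m_\pm$, and a M\"obius map of this form fixing two distinct values forces one of them to be $0$. But as written your Step 2 has a gap exactly where the hypothesis ``index $>2$'' must enter: for an arbitrary projective $\Phi$ the map $T_\Phi$ may be \emph{decreasing} on $[m,M]$ (it then swaps the endpoints rather than fixing them), or may be the identity (which happens precisely when $\Phi$ is an isometry up to sign). The paper deals with this by noting that $\Phi\mapsto\operatorname{sign}(a_\Phi)$ is a homomorphism to $\Z/2$ killing the isometries, so index $>2$ guarantees a \emph{non-isometric} $\Phi$ preserving each of $f^{-1}(m_+)$ and $f^{-1}(m_-)$; for that $\Phi$ one has $\ell\neq 0$ and the fixed-point argument closes. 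You use the index hypothesis only in Step 3, so Step 2 as stated does not go through for all configurations.

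The real gap is Step 3, which you explicitly leave open. The normalization $\Phi_*K=\pm K$ is not the technical heart of the lemma; the paper gets it in two lines, \emph{before} any of the M\"obius analysis, and you are missing the one idea that makes it immediate: \textbf{the flow of a Killing field on a Lorentzian torus is periodic} (this is in \cite{BetM}). Once you know the projective algebra is spanned by $K$ (the paper quotes \cite[Theorem 6]{Matveev_au_japon}; your Step 1 reproves a weaker statement), you have $\Phi_*K=cK$, hence $\Phi\circ\phi_K^t\circ\Phi^{-1}=\phi_K^{ct}$; if $T_0$ is the minimal period of $\phi_K$ this forces $cT_0\in T_0\Z$, and applying the same to $\Phi^{-1}$ gives $c^{-1}\in\Z$, so $c=\pm1$. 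Your proposed route via volume integrals and iteration of the composition law is unnecessary and, as you acknowledge, not carried out. A secondary issue in Step 1: your passage from ``$L_Xg=0$ on $\{\kappa\neq0\}$'' to ``$L_Xg=0$ everywhere'' assumes $\{\kappa\neq0\}$ is dense, which non-flatness does not guarantee (the curvature could vanish on an open set). The clean argument is that for an affine $X$ the tensor $L_Xg$ is parallel, hence (by the Fact of \cite{BoubetMou} quoted in the paper, valid for non-flat Lorentzian surfaces) a constant multiple of $g$, and the constant vanishes by compactness.
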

\begin{proof} Let $\Phi$ be a projective transformation of $\T$.
 As $g$ is not flat, it follows from \cite[Theorem 6]{Matveev_au_japon} that the space of projective vector fields of $g$ is spanned by $K$. As $\Phi_*K$ is a projective vector field (its flow is a composition of projective transformations), there exists $c\in \R^*$ such that $\Phi_*K=cK$. The flow of $K$ being periodic (see for example \cite{BetM}), $|c|$ has to be $1$. 
 
 The metric $\Phi^*g$ is projectively equivalent to $g$ therefore, by Theorem \ref{theo_classique}, 
 \begin{equation}\label{eq_I*}I= \left(\frac{\det(g)}{\det(\Phi^* g)}\right)^{2/3} \Phi^*g\end{equation}
 is an integral of the geodesic flow of $g$. As $g$ is not flat and has a non trivial Killing field, it follows from \cite[Corollary 1]{Matveev_au_japon} that $I$ is a linear combination of $g$ and the square of Clairaut integral. Hence denoting by $C$ the Clairaut integral there exist $a\neq 0$ and $\ell$ such that  $I=a^{-1/3}( g +\ell C^2)$. As $\Phi$ preserves $\pm K$, the minimum (resp.  maximum) of the functions $g(K,K)$ and $\Phi^*g(K,K)$ are equal. We denote them by $m_-$ and $m_+$. 
 

We denote by $\text{vol}_g$ the volume form of $g$. The $1$-form $i_K\text{vol}_g$ is closed and induces a nowhere zero closed $1$-form on the space of leaves of $K$ (that is diffeomorphic to the circle). We choose a coordinate $x$ on this space such that $dx=i_K\text{vol}_g$. We call such a coordinate a canonical coordinate associated to $(g,K)$.
Note that the coordinate $x$ that appears in Proposition \ref{prop_proj_bande} induces a canonical coordinate associated to $(g,K)$ on the space of leaves.
Let $f:S^1\rightarrow \R$ be the expression of the function $g(K,K)$ in this coordinate. It follows from Proposition \ref{prop_proj_bande} that the expression of $\Phi^*g(K,K)$ in this coordinate is $\frac{af}{1+\ell f}$, where $a$ and $\ell$ are the numbers given above (note that  $x$ is not a canonical coordinate associated to $(\Phi^*g,K)$). It is therefore clear that the set of lightlike orbits of $K$ is preserved by $\Phi$ and that 
 $$\begin{array}{ll}
\text{if}\ a>0, &  \Phi(f^{-1}(m_+))=f^{-1}(m_+)\ \text{and}\ \Phi( f^{-1}(m_-))=f^{-1}(m_-)\\
\text{if}\ a<0, &  \Phi(f^{-1}(m_+))=f^{-1}(m_-)\ \text{and}\ \Phi( f^{-1}(m_-))=f^{-1}(m_+).
  \end{array}
$$
If the index of the isometry group in the projective group is greater than $2$, we can choose $\Phi$ such that leaves invariant both 
$f^{-1}(m_-)$ and  $f^{-1}(m_+)$. 
It implies that 
 $m_\pm=\frac{a m_\pm}{1+\ell m_\pm}$, and as $m_-\neq m_+$ (because the metric is not flat),  $m_+m_-=0$ and $a=1+\ell m_+$ or $1+\ell m_-$. 
\end{proof}

Let $\Phi$ be a projective transformation of $\T$ such that $\Phi^*g\neq \pm g$. 
Let $p$ be a point of $T$ belonging to a lightlike orbit of $K$ and let $q$ be the image of $p$ by $\Phi$. There exist a real function $f$, an interval $I$ (resp. $I'$) and  a neighborhood of $p$ (resp of $q$) that is isometric to $I\times S^1$ (resp. $I'\times S^1$) endowed with the metric $2dxdy +f(x)dy^2$. By Lemma \ref{lem_KsurK}, we can assume that $f$ is non negative.

According to Proposition \ref{prop_proj_bande} and Lemma \ref{lem_KsurK}, there exists $\ell\in \R$ such that  $\Phi^*g$ reads on each of these neighborhoods
$$(1+\ell m_+)\big(\frac{\ell}{(1+\ell f(x))^2} dx^2 +\frac{2}{1+\ell f(x)}dxdy +\frac{f(x)}{1+\ell f(x)}dy^2\big),$$
note that  $\ell\neq 0$ as $\Phi^*g\neq \pm g$.

Let $J$ be the Jacobian of $\Phi$ at  $p$ in the coordinates $(x,y)$. As the orbit of $K$ through $q$ is also lightlike, it satisfies:
$${}^tJ\begin{pmatrix} 0&1\\1&0\end{pmatrix}J=(1+\ell m_+) \begin{pmatrix} \ell &1\\1&0\end{pmatrix}$$
therefore, knowing that $J\begin{pmatrix} 0\\1\end{pmatrix}=\pm\begin{pmatrix} 0\\1\end{pmatrix}$,
$J=\pm \begin{pmatrix}
1+\ell m_+ &0\\
\ell/2 & 1                                                                                                                                                                   \end{pmatrix}$. 
The matrix $J^{-1}$ being the Jacobian of $\Phi^{-1}$ at $q$, the matrix of $\Phi^{-1}{}^*g$ at $q$ is given by:
$${}^tJ^{-1}\begin{pmatrix} 0&1\\1&0\end{pmatrix}J^{-1}= \frac{1}{1+\ell m_+} \begin{pmatrix} \frac{-\ell}{1+\ell m_+} &1\\1&0\end{pmatrix}.$$ 
As $\Phi^{-1}{}^*g$ is also projectively equivalent to $g$, by Proposition \ref{prop_proj_bande}  there exist $a$ and $c$ such that $
\Phi^{-1}{}^*g$ reads
$$
a\big(\frac{c}{(1+c f(x))^2} dx^2 +\frac{2}{1+c f(x)}dxdy +\frac{f(x)}{1+c f(x)}dy^2\big).
$$
Clearly $a=\frac{1}{1+\ell m_+}$ and  $c=\frac{-\ell}{1+\ell m_+}$.
Therefore, possibly replacing $\Phi$ by $\Phi^{-1}$, we can assume that $\ell<0$ (as $\ell>0$ implies $1+\ell m_+>0$).

We now compare the values of the integrals of the forms  $i_K \text {vol}_g$ and $i_K \text{vol}_{\Phi^*g}$ on the space of leaves of $K$. The transformation $\Phi$  preserving $K$, these values must be the same. If we denote by $dx$ the form $i_K \text {vol}_g$ then the form $i_K \text{vol}_{\Phi^*g}$ is   $\frac {1+\ell m_+}{(1+\ell f(x))^{\frac 32}} dx$, where $f$ is the expression of $g(K,K)$ in the coordinate $x$.

As $\ell<0$ and $f\geq 0$, for any $x$ we have 
$$0<(1+\ell f(x))^{3/2}<1+\ell f(x)<1+\ell m_+<1,$$
therefore for any $x$, we have $\frac {1+\ell m_+}{(1+\ell f(x))^{\frac 32}}>1$ and the two integrals are therefore different. We have a contradiction, therefore $\T$ has no Killing field.
\hop

If $(\T,g)$ has no Killing field then by \cite[Theorem 2]{Matveev_au_japon}, it is 
of Liouville type, therefore by  \cite[Lemma 1]{Matveev_au_japon} it is 
projectively equivalent to a Riemannian torus $(\T,g_0)$ also of Liouville type.
We recall that a  metric is of Liouville type if there exists two real periodic functions $h_1$ and $h_2$ and a lattice $\Lambda$ such that it is isometric to the quotient of $\R^2$ endowed with the metric  $(h_1(x)+h_2(y))(dx^2 \pm dy^2)$ by $\Lambda$. In the Riemannian case, the function $I_0\,:\, T\T\rightarrow \R$ defined by $I_0(v)=(h_1(x)+h_2(y))(h_1(y)dx^2-h_2(x)dy^2)$ is an integral of the geodesic flow of $g_0$, cf. \cite{Kiki} for example (the expression in \cite{Matveev_au_japon} looks different because the integral given there is on the cotangent bundle).

As $g_0$ is not flat, by  \cite[Corollary 1]{Matveev_au_japon}, the set of quadratic integrals of the geodesic flow of $g_0$ is $2$-dimensional and therefore spanned by $I_0$ and the energy. It follows by Theorem \ref{theo_classique} that the set $\mathcal P_0$  of Riemannian metrics projectively equivalent to $g_0$ is diffeomorphic to $\{(a,b)\in \R^2\,|\  ah_2(y)+b>0\ \text{and} -ah_1(x)+b>0,\ \forall (x,y)\in \R^2\}$ and therefore to  $\R^2$. Clearly, the identity component of the projective group of $g_0$ acts trivially on this set.

According to \cite{Zeghib}, the group of isometry of $g_0$ has finite index in the group of projective transformations. It implies that the group of projective transformations has a finite number of connected components. Hence the action of the projective group on $\mathcal P_0$ reduces to the action of a finite group. As this action is smooth it preserves a Riemannian metric. By the uniformization theorem, this action is conjugated to an action by biholomorphisms on $\C$ or the disk. Consequently, the action of the projective group fixes a metric of $\mathcal P_0$, we can assume that it is $g_0$. The set of non isometric projective transformations of $g$ is contained in the set of isometries of $g_0$ that do not preserve $I_0$. Theorem \ref{theo_fini} therefore follows from Lemma \ref{lem_isom_liouv}. 
\begin{lem}\label{lem_isom_liouv}
Let $g_0=(h_1(x)+h_2(y))(dx^2+dy^2)$ be a Riemannian metric of Liouville type on the torus.
The group of isometries of $g_0$ preserving the integral $I_0$ as an index lower or equal to $2$ in the isometry group of $g_0$. More precisely, there exists an   isometry  $\Phi$ of $g_0$ that does not preserve the Liouville integral $I_0$ if and only if there exist $k\in \R$ and $c\in \R$  such that $h_1$ is $2k$-periodic and $h_2(x+k)=h_1(x)+c$ or $h_2(-x-k)=h_1(x)+c$. Moreover in that case (after a possible change of origin) $\Phi(x,y)=(y+k,x+k)$ or $\Phi(x,y)=(-y+k,-x-k)$.
\end{lem}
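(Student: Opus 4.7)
The plan is to classify the isometries of $g_0$ using the conformal flatness of the metric, and to determine case by case which of them preserve $I_0$. Since $g_0=(h_1(x)+h_2(y))(dx^2+dy^2)$ is conformal to the Euclidean metric on the universal cover $\R^2$, any isometry lifts to a Euclidean conformal diffeomorphism; as it must descend to the compact torus, this lift normalizes the covering lattice and is therefore affine. Writing $z=x+iy$, the lift has the form $z\mapsto\alpha z+\beta$ or $z\mapsto\alpha\bar z+\beta$ with $|\alpha|=1$. Substituting into the isometry equation $h_1(\Phi_1)+h_2(\Phi_2)=h_1(x)+h_2(y)$ and differentiating in $x$ and then in $y$, I would show that unless the rotation angle is a multiple of $\pi/2$, one gets $h_1''=h_2''=\mathrm{const}$, and the torus-periodicity of $h_1,h_2$ then forces both to vanish, making $g_0$ flat. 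Hence $\alpha\in\{\pm 1,\pm i\}$.

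When $\alpha=\pm 1$ the lift has the form $(x,y)\mapsto(\pm x+a,\pm y+b)$ and preserves the two coordinate foliations individually; the isometry equation splits, and the additive constant it produces is killed by iterating $\Phi$ on the torus and using the boundedness of $h_1,h_2$. A direct substitution then gives $\Phi^*I_0=I_0$. When $\alpha=\pm i$ the lift swaps the coordinate foliations; the isometry equation takes the coupled form $h_2(\Phi_2(x,y))-h_1(x)=h_2(y)-h_1(\Phi_1(x,y))=c$ for some constant $c$. A direct computation then yields $\Phi^*I_0=-I_0+c\,g_0$, which is not equal to $I_0$ because the non-flatness of $g_0$ prevents $I_0$ from being proportional to $g_0$. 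Consequently the $I_0$-preserving isometries are exactly those whose linear part has $\alpha=\pm 1$, and these form a subgroup of index at most $2$ (since $\{\pm 1\}$ has index $2$ in $\{\pm 1,\pm i\}$), giving the first assertion of the lemma.

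For the \emph{moreover} part, suppose a non-$I_0$-preserving isometry exists. In the antiholomorphic $\alpha=\pm i$ case I choose an origin-translation that brings the translation part $(a,b)$ into $(k,k)$ or $(k,-k)$, giving $\Phi(x,y)=(y+k,x+k)$ or $\Phi(x,y)=(-y+k,-x-k)$ respectively; reading the isometry equation in this canonical form yields $h_2(x+k)=h_1(x)+c$ (respectively $h_2(-x-k)=h_1(x)+c$), and combining this with the complementary relation for $h_1$ that comes from the isometry condition gives that $h_1$ is $2k$-periodic. The holomorphic $\alpha=\pm i$ case is reduced to the antiholomorphic one: one checks that $\Phi^2$ is an $\alpha=-1$ holomorphic isometry whose isometry condition provides a symmetry $h_2(-y+a+b)=h_2(y)$, hence an auxiliary antiholomorphic $\alpha=1$ isometry $\sigma(x,y)=(x,-y+a+b)$; composing $\Phi$ with $\sigma$ produces an antiholomorphic $\alpha=\pm i$ isometry, to which the previous normalization applies. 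The main obstacle is this final reduction step, where one has to verify carefully that the resulting representative falls precisely into one of the two canonical forms listed in the statement, rather than into some degenerate case.
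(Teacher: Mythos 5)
Your overall strategy is the same as the paper's: use conformal flatness to force the lift of an isometry to be affine with linear part in the symmetry group of the coordinate axes, then decide case by case which linear parts are compatible with preserving $I_0$. Your treatment of the first assertion (index $\le 2$) is complete and correct: the $I_0$-preserving isometries are exactly those with linear part in the Klein group $\{\pm\mathrm{Id},\pm\mathrm{diag}(1,-1)\}$, which has index $2$ in the full dihedral group of possible linear parts. In fact your handling of the $\pi/2$-rotations ($\alpha=\pm i$ holomorphic) is \emph{more} careful than the paper's: the paper asserts that any isometry whose linear part is not $\pm\bigl(\begin{smallmatrix}0&1\\1&0\end{smallmatrix}\bigr)$ preserves $h_1$ and $h_2$ separately and hence $I_0$, whereas for a rotation by $\pi/2$ the isometry equation couples $h_1$ and $h_2$ exactly as in the swap case and your computation $\Phi^*I_0=-I_0+c\,g_0\neq I_0$ is the correct one. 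Since both families lie in the single nontrivial coset, the index bound survives either way, and that bound is all the paper actually uses.

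The genuine gap is where you suspected it: the reduction of the holomorphic $\alpha=\pm i$ case to the antiholomorphic one. The auxiliary reflection $\sigma(x,y)=(x,-y+\mathrm{const})$ is an isometry of the metric on $\R^2$, but it need not descend to the torus: its linear part $\mathrm{diag}(1,-1)$ must preserve the covering lattice $\Lambda$, and a lattice can be invariant under the $\pi/2$-rotation without being invariant under $\mathrm{diag}(1,-1)$. Concretely, take $h_1=h_2=1+\tfrac12\cos(2\pi\,\cdot)$ and $\Lambda=\langle(2,1),(-1,2)\rangle\subset\Z^2$: the rotation $(x,y)\mapsto(y,-x)$ is an isometry of the quotient torus not preserving $I_0$, but neither $\mathrm{diag}(1,-1)$ nor either swap matrix preserves $\Lambda$, so no isometry of the form $(y+k,x+k)$ or $(-y+k,-x-k)$ exists on that torus. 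Thus the reduction cannot be completed in general --- and this is not repairable, because the ``moreover'' clause of the lemma as stated (and the paper's own proof of it) fails on such examples; only the index statement and the characterization in terms of $h_1,h_2$ survive unconditionally. You should therefore either restrict the ``moreover'' part to the antiholomorphic case, or add the $\pi/2$-rotations as a third canonical form.
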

\begin{proof}
An isometry $\Phi$  of $g_0$ is a conformal transformation of the flat metric induced by $dx^2+dy^2$ and therefore an isometry of it. It implies that the function $h_1+h_2$ is $\Phi$ invariant.
As $h_1+h_2$ is not constant, $\Phi$ preserves or permutes the lines $\R\partial_x$ and $\R\partial_y$, therefore  $\Phi_0$, the linear part of $\Phi$ in the coordinates $(x,y)$, is one of the following:
$$\pm \text{Id}, \pm \begin{pmatrix} 0&1\\ 1&0 \end{pmatrix}, \pm \begin{pmatrix} 0&1\\ -1&0 \end{pmatrix}, \pm \begin{pmatrix} 1&0\\ 0&-1 \end{pmatrix}.$$
If $\Phi_0\neq \pm \begin{pmatrix} 0&1\\ 1&0 \end{pmatrix}$ then, by a short computation, $\Phi$ preserves $h_1$ and $h_2$ (and not just their sum) and therefore $I_0$. It implies that the product of two isometries always preserves $I_0$ proving the first part of the lemma.

If $\Phi_0= \begin{pmatrix} 0&1\\ 1&0 \end{pmatrix}$ then (after a possible change of origin) there exists $k\in \R$ such $\Phi(x,y)=(y+k,x+k)$.
We therefore have $h_1(x)+h_2(y)=h_1(y+k)+h_2(x+k)$, for any $x$ and $y$. It follows that $h_1(x)-h_2(x+k)$ is constant and $h_1(x+2k)=h_1(x)$. Reciprocally, if $h_1(x)-h_2(x+k)$ is constant and $h_1(x+2k)=h_1(x)$ then the map $(x,y)\mapsto (y+k,x+k)$ is an isometry that does not preserve the Liouville integral.

If $\Phi_0= -\begin{pmatrix} 0&1\\ 1&0 \end{pmatrix}$ then (after a possible change of origin) there exists $k\in \R$ such $\Phi(x,y)=(-y+k,-x-k)$. 
We therefore have $h_1(x)+h_2(y)=h_1(-y+k)+h_2(-x-k)$, for any $x$ and $y$. It follows that $h_1(x)-h_2(-x-k)$ is constant and $h_1(x+2k)=h_1(x)$. Reciprocally, if $h_1(x)-h_2(-x-k)$ is constant and $h_1(x+2k)=h_1(x)$ then the map $(x,y)\mapsto (-y+k,-x-k)$ is an isometry that does not preserve the Liouville integral.
\end{proof}

\bigskip
\begin{tabular}{ll}
 Address: & Univ. Bordeaux, IMB, UMR 5251, F-33400 Talence, France\\
& CNRS, IMB, UMR 5251, F-33400 Talence, France\\
\\
E-mail:&{\tt pierre.mounoud@math.u-bordeaux.fr}
\end{tabular}

\end{document}